\newtheorem{theorem}{Theorem}
\newtheorem{remark}[theorem]{Remark}
\newtheorem{lemma}[theorem]{Lemma}
\newtheorem{proposition}[theorem]{Proposition}
\newtheorem{corollary}[theorem]{Corollary}
\numberwithin{equation}{section}
\newcommand{\C}{\mathbb C}
\newcommand{\Z}{\mathbb{Z}}
\def\ml{\mathfrak{l}}
\def\mg{\mathfrak{g}}
\def\mm{\mathfrak{m}}
\def\mn{\mathfrak{n}}
\def\mh{\mathfrak{h}}
\def\mb{\mathfrak{b}}
\def\bs{\mathbf{s}}
\def\sl{\mathfrak{sl}}
\def\gl{\mathfrak{gl}}
\def\ml{\mathfrak{l}}
\def\ba{\mathbf{a}}
\def\br{\mathbf{r}}
\def\bs{\mathbf{s}}
\def\bm{\mathbf{m}}
\def\b1{\mathbf{1}}
\newcommand{\ch}{\mathcal{H}}
\newcommand{\cc}{\mathcal{C}}
\newcommand{\cd}{D}
\title[Whittaker category]{\bf   Whittaker  categories and  the minimal nilpotent finite $W$-algebras for  $\mathfrak{sl}_{n+1}$}
\author{Genqiang Liu, Yang Li}
\date{\today}
\begin{document}

\begin{abstract}
For any $\mathbf{a}=(a_1,\dots,a_n)\in \mathbb{C}^n$, we introduce  a Whittaker  category $\mathcal{H}_{\mathbf{a}}$
whose objects are $\mathfrak{sl}_{n+1}$-modules $M$ such that
  $e_{0i}-a_i$ acts locally nilpotently on $M$ for all $i \in \{1,\dots,n\}$,
and the subspace
 $\mathrm{wh}_{\mathbf{a}}(M)=\{v\in M \mid e_{0i} v=a_iv, \ i=1,\dots,n\}$ is finite dimensional.
In  this paper, we  first give a tensor product
 decomposition $U_S=W\otimes B$ of  the localization $U_S$ of $U(\mathfrak{sl}_{n+1})$ with respect to the Ore subset $S$ generated by $e_{01},\dots, e_{0n}$. We show that the associative algebra $W$ is isomorphic to
 the type $A_n$ finite $W$-algebra $W(e)$ defined by a minimal nilpotent element $e$ in $\mathfrak{sl}_{n+1}$.
Then using $W$-modules as a bridge, we show that
   each block with a generalized central character of $\mathcal{H}_{\mathbf{1}}$ is equivalent to the corresponding  block of the cuspidal
 category $\mathcal{C}$, which is completely characterized  by Grantcharov and Serganova.  As a consequence,
each regular integral block of $\mathcal{H}_{\mathbf{1}}$  and the category of
finite dimensional modules over $W(e)$
can be described by a well-studied quiver with certain quadratic relations.
\end{abstract}
\vspace{5mm}
\maketitle
\noindent{{\bf Keywords:}  Whittaker module, finite $W$-algebra, cuspidal module, quiver}
\vspace{2mm}

\noindent{{\bf Math. Subj. Class.} 2020: 17B05, 17B10, 17B20, 17B35}

\section{introduction}

Let $\mg$ be a complex semisimple Lie algebra. Whittaker modules and cuspidal modules are two classes of important modules for $\mg$. Let $\mn$ be the nilradical of a Borel subalgebra $\mb$ of $\mg$. Fix a Lie algebra homomorphism $\eta:\mn\rightarrow \C$.  A $\mg$-module $V$ is called a Whittaker module if $x-\eta(x)$ acts locally nilpotently on $V$ for any $x\in\mn$. A finitely generated weight $\mg$-module with finite dimensional weight spaces is called a cuspidal module if every root vector acts injectively on it.  Fernando \cite{F} proved that every simple weight module is cuspidal or is induced from some cuspidal module over a proper parabolic subalgebra, and further that the only simple Lie algebras which admit cuspidal modules are those of types $A$ or $C$. Simple cuspidal modules were classified by Mathieu, see \cite{M}. A complete classification and an explicit description of the  cuspidal category  for $\sl_{n+1}$ were given \cite{GS1}. All cuspidal generalized weight modules for $\sl_{n+1}$ were characterized in \cite{MS}.
The  cuspidal category for
$\mathfrak{sp}_{2n}$ is semisimple, see \cite{BKLM}.

The systematic study of Whittaker modules  for $\mg$ was started by Kostant, and several important results on Whittaker modules were proved in \cite{K}. The map $\eta$ is called non-singular if $\eta(x_\alpha)\neq 0$ for any simple root vector $x_\alpha$. When $\eta$ is non-singular,  Kostant has shown that the Whittaker category $\mathcal{N}(\eta)$ is equivalent to the category of modules over the  center of $U(\mg)$. Whittaker modules corresponding to singular $\eta$ can be defined similarly, see \cite{Ly,Mc}.  Whittaker modules can be studied in the context of category $\mathcal{O}$, Harish-Chandra bimodules and $D$-modules, see \cite{Ba,MS1,MS2}. Whittaker modules have been generalized to infinite dimensional Lie algebras with  triangular decompositions. For example, Whittaker modules over the Virasoro algebra and its generalizations were studied in \cite{OW1,OW2,GLZ, LPX,TWX}.
Whittaker modules have also been studied for generalized Weyl algebras by Benkart and Ondrus, see  \cite{BO}.  Whittaker modules for  affine
Lie algebras were studied  in \cite{ALZ, C,CF, CJ,GL,GZ}. In \cite{Pr1},  Premet has
shown that  the endomorphism algebras (called the finite $W$-algebras) of  universal Whittaker modules associated to  nilpotent elements are quantizations of  the coordinate rings of  special
transverse slices. The classification of finite dimensional irreducible modules over $W$-algebras was achieved in \cite{L,LO}.
The recent works in \cite{PT,T} focused on the description of one-dimensional $W$-modules.
In \cite{BM},  Batra and Mazorchuk  have defined  more
general notions of Whittaker modules.
They introduced  Whittaker pairs $(\mathfrak{g}, \mathfrak{n})$ of Lie algebras, where $\mathfrak{n}$ is a quasi-nilpotent Lie subalgebra of $\mathfrak{g}$ such that the adjoint action of $\mathfrak{n}$ on the quotient space $\mathfrak{g}/\mathfrak{n}$ is locally nilpotent. Under this general Whittaker set-up in \cite{BM}, they also determined a subcategory decomposition of the category $\widetilde{\mathcal{H}}$ consisting of $\mathfrak{g}$-modules such that $\mathfrak{n}$ acts locally finitely. A description of such category for $\mathfrak{sp}_{2n}$ was studied in \cite{LZZL}.
However, characterizations of the category $\widetilde{\mathcal{H}}$ for most Lie algebras are still open.

In this paper, we assume that $\mg=\sl_{n+1}$. The subspace $\mm_n=\oplus_{i=1}^n \C e_{0i}$  is a commutative subalgebra of $\sl_{n+1}$. Since the adjoint action of $\mm_n$ on the quotient space $ \sl_{n+1}/\mm_n$ is nilpotent,  $(\sl_{n+1},\mm_n)$ is a Whittaker pair.
For any $\mathbf{a}\in \C^n$, we have the   Whittaker category $\mathcal{H}_{\mathbf{a}}$ as in the abstract. In this paper, we study the block decompositions of $\mathcal{H}_{\mathbf{a}}$ for nonzero $\ba$, and find the relation between $\mathcal{H}_{\mathbf{a}}$ and the cuspidal  category $\mathcal{C}$.

 The paper is organized as follows. In Section 2, we collect all necessary preliminaries. In Section 3,
  we show that the subalgebra  $$W:=\{u\in U_S\mid [h_i, u]=[e_{0i},u]=0,\ 1\leq i\leq n\}$$ is a tensor product factor of
  $U_S$.
Furthermore, it is proven that  $W$  is  isomorphic to Premet's  type $A_n$ finite $W$-algebra defined by the  minimal nilpotent element $e=e_{10}+\dots+e_{n0}$, see Theorem \ref{iso-thw}.
 In Section 4,  we first give an explicit equivalence between $\ch_{\b1}$ and the category $W\text{-mod}$ of finite dimensional
 $W$-modules, see Proposition \ref{equ-hw}. Then we give realizations of all simple modules in
 $\ch_{\b1}$, see Theorem \ref{pi-image} and Proposition \ref{non-int-sing}, using the Shen-Larsson modules
 and finite dimensional simple modules  of finite $W$-algebras. After that, we show that  each
 block $W^\lambda\text{-mod}$ of $W\text{-mod}$ with the generalized central character $\chi_{\lambda}$ is equivalent
 to a block $\mathcal{C}_{\mu}^\lambda$ of the cuspidal category for a suitable $\mu$, see Theorem \ref{equ-cw}.
As a consequence,  $\ch_{1}^\lambda$ is equivalent to $\mathcal{C}_{\mu}^\lambda$, see Theorem \ref{main-th}.
It should be mentioned that both Theorem \ref{main-iso-th} and the morphism from
Proposition \ref{s-iso}  have some potential connection with the formulas on page $12$ given in the paper of Gelfand and Kirillov on skew-fields, see \cite{GK}.

In this paper, we denote by $\Z$ and $\C$ the sets of integers and complex numbers, respectively. All vector spaces and algebras are over $\C$. For a $z\in \Z$,  define $\Z_{\geq z}=\{m\in \Z\mid m\geq z\}$.
For a Lie algebra
$\mathfrak{g}$ we denote by $U(\mathfrak{g})$ its universal enveloping algebra. For an associative algebra $A$, let $Z(A)$ be its center.
 We write $\otimes$ for
$\otimes_{\mathbb{C}}$. For an $\alpha=(\alpha_1,\dots,\alpha_n)\in \C^n$, let $|\alpha|=\alpha_1+\dots+\alpha_n$.

\section{Preliminaries}

In this section, we collect some necessary definitions and results, including
Whittaker categories, cuspidal modules, central characters, translation  functors and Grantcharov-Serganova's Theorem on cuspidal modules.

\subsection{Whittaker category $\ch_\ba$ of $\mathfrak{sl}_{n+1}$}

 For a fixed  $n\in \Z_{\geq 2}$, let $\gl_{n+1}$ be the general linear Lie algebra over $\C$,  i.e., $\gl_{n+1}$ consists of $(n+1)\times (n+1)$-complex matrices.
 Let $e_{ij}$   denote the $(n+1)\times (n+1)$-matrix  unit whose $(i,j)$-entry is $1$ and $0$ elsewhere, $0\leq i,j \leq n$. Then $\{e_{ij}\mid 0\leq i,j \leq n\}$ is a basis of $\gl_{n+1}$. Denote $\mh_n=\oplus_{i=0}^{n-1}\C (e_{ii}-e_{i+1,i+1})$ which is a Cartan subalgebra of $\sl_{n+1}$.  In  $\mh_n$, let $h_i=e_{ii}-\frac{1}{n+1}(\sum_{k=0}^{n}e_{kk})$ for any $i\in \{0, 1,\dots,n\}$.
 Define a decomposition $\sl_{n+1}=\mm_n^-\oplus\ml_n\oplus \mm_n$, where $\mm_n^-=\oplus_{i=1}^n \C e_{i0}$, $\mm_n=\oplus_{i=1}^n \C e_{0i}$, and $\ml_n\cong \gl_n$ is the subalgebra spanned by $e_{ij}, 1\leq i \neq j\leq n$ and $h_k, k=1,\dots,n$. The subalgebra $\mathfrak{p}_n:=\ml_n\oplus \mm_n$ is a maximal parabolic subalgebra of $\sl_{n+1}$.

  Since $\mm_n$ is a commutative subalgebra of  $ \sl_{n+1}$ and the adjoint action of $\mm_n$ on the quotient space $ \sl_{n+1}/\mm_n$ is nilpotent,  $(\sl_{n+1},\mm_n)$ is a Whittaker pair in the sense of  \cite{BM}.
For an  $\sl_{n+1}$-module $M$,  we say
that the action of $\mm_n$  on M is locally finite provided that $U(\mm_n)v $ is finite dimensional for all $v\in M$.  Let $\widetilde{\ch}$ denote
the category consisting  of all $\sl_{n+1}$-modules with
 locally finite action of $\mm_n$. According to the action of
$\mm_n$, we have that
$$\widetilde{\ch}=\oplus_{\ba\in \C^n} \widetilde{\ch}_{\ba}, $$
where each $\widetilde{\ch}_{\ba}$ is a  full subcategory of $\widetilde{\ch}$
whose objects $M$ satisfying that  $e_{0i}-a_i$ acts locally nilpotently on $M$ for any $i \in \{1,\dots,n\}$.

 For any $\mathbf{a}=(a_1,\dots,a_n)\in \C^n$, we define a  full subcategory $\ch_\ba$ of $\widetilde{\ch}_{\ba}$
whose objects  $M$ satisfying that the subspace
 $$\mathrm{wh}_{\mathbf{a}}(M)=\{v\in M \mid e_{0i} v=a_iv, \ \forall\ i=1,\dots,n\}$$ is finite dimensional.
 An element in $\mathrm{wh}_{\mathbf{a}}(M)$ is called a Whittaker vector.

 \subsection{Cuspidal category $\mathcal{C}$ of $\sl_{n+1}$}

  An $\sl_{n+1}$-module  $M$ is called a {\it weight module} if $\mathfrak{h}_{n}$ acts diagonally on  $M$, i.e.
$$ M=\oplus_{\alpha\in \C^n} M_\alpha,$$
where $M_\alpha:=\{v\in M \mid h_iv=\alpha_i v, i=1,\dots,n\}.$   Denote $\mathrm{Supp}(M):=\{\alpha\in \C^n \mid M_\alpha\neq0\}$.

A weight
$\sl_{n+1}$-module $M$ is cuspidal if $M$ is finitely generated, all $M_{\alpha}$ are finite-dimensional, and $e_{ij}: M\rightarrow
M$ is  injective for all $i, j: 0\leq i \neq j \leq n$. Cuspidal modules play an important role in
the classification of all simple weight
modules with finite-dimensional weight spaces over reductive Lie algebras, see \cite{F,M}.
 Denote by $\mathcal{C}$ the category of all cuspidal
$\sl_{n+1}$-modules. A weight $\sl_{n+1}$-module $M$ is uniformly bounded if there is a $k\in \Z_{>0}$ such that $\dim M_{\alpha}\leq k$ for all $\alpha\in\mathrm{Supp}(M)$. Let $\mathcal{B}$ be  the category of all uniformly bounded $\sl_{n+1}$-modules. It is obvious that $\mathcal{C}\subset \mathcal{B}$.

  \subsection{Central characters}

Let $\{\epsilon_0,\dots, \epsilon_n\}$ be the dual basis  of $\{e_{00},\dots, e_{nn}\}$.
Then $$\mh_n^*=\{\lambda:=\sum_{i=0}^n \lambda_i\epsilon_i\mid \sum_{i=0}^n \lambda_i=0\}.$$
The half sum $\rho$ of the positive roots of $\sl_{n+1}$
is
$$\rho=\frac{1}{2}\sum_{0\leq i<j\leq n} (\epsilon_i-\epsilon_j)=\frac{1}{2}(n\epsilon_0+(n-2)\epsilon_1+\cdots-n\epsilon_{n}).$$
The {\it dot action} of the Weyl group $S_{n+1}$ of $\sl_{n+1}$ on $\mh_n^*$ is
defined by
$$ \omega\cdot\lambda=\omega(\lambda+\rho)-\rho.$$

A weight $\lambda$ is singular (resp. regular) if its stabilizer under the dot action of the Weyl group is non-trivial (resp. trivial). The following lemma is immediate.

\begin{lemma}\label{regular} \begin{enumerate}[$($a$)$]
 \item
 A weight $\lambda\in\mh_n^*$ is regular if and only if $\lambda_0,\lambda_1-1,\dots,\lambda_{n}-n$ are all distinct.
 \item For $1 \leq i\leq n$, $(0\cdots i)\cdot \lambda=(\lambda_i-i)\epsilon_0+\sum_{k=1}^i(\lambda_{k-1}+1)\epsilon_k+\sum_{l=i+1}^n\lambda_{l}\epsilon_l$.
 \end{enumerate}
\end{lemma}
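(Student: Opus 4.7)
The lemma is routine and splits into two essentially independent computations; the plan is to translate regularity into an ordinary (non-dot) Weyl group condition, then compute one cycle explicitly.

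For part (a), the plan is to reduce the dot-action stabilizer of $\lambda$ to the ordinary stabilizer of $\lambda+\rho$. Since $\omega\cdot\lambda=\omega(\lambda+\rho)-\rho$, the equation $\omega\cdot\lambda=\lambda$ is equivalent to $\omega(\lambda+\rho)=\lambda+\rho$. The Weyl group $S_{n+1}$ acts by permuting the coordinates $\epsilon_0,\dots,\epsilon_n$, so $\lambda+\rho$ has trivial stabilizer if and only if its coefficients in the basis $\{\epsilon_0,\dots,\epsilon_n\}$ are pairwise distinct. Using the formula for $\rho$ given just before the lemma, the coefficient of $\epsilon_i$ in $\lambda+\rho$ equals $\lambda_i+\tfrac{n-2i}{2}=\lambda_i-i+\tfrac{n}{2}$. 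Subtracting the common constant $\tfrac{n}{2}$, the distinctness of these coefficients is the same as distinctness of $\lambda_0,\lambda_1-1,\dots,\lambda_n-n$. This gives (a).

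For part (b), the plan is to apply the cycle $\omega=(0\,1\,2\,\cdots\,i)$ (so $\omega(j)=j+1$ for $0\le j\le i-1$, $\omega(i)=0$, and $\omega(j)=j$ for $j>i$) to $\lambda+\rho$ coordinate by coordinate, then subtract $\rho$. Writing $\rho=\sum_j \rho_j\epsilon_j$ with $\rho_j=\tfrac{n-2j}{2}$, one gets
\[
\omega(\lambda+\rho)=\lambda_i\epsilon_0+\rho_i\epsilon_0+\sum_{k=1}^{i}(\lambda_{k-1}+\rho_{k-1})\epsilon_k+\sum_{l>i}(\lambda_l+\rho_l)\epsilon_l.
\]
Subtracting $\rho$ and using the identities $\rho_{k-1}-\rho_k=1$ for $1\le k\le i$ and $\rho_i-\rho_0=-i$ collapses the $\rho$-contributions into the constants $-i$ (in the $\epsilon_0$-coefficient) and $+1$ (in each $\epsilon_k$-coefficient, $1\le k\le i$), exactly matching the claimed formula.

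There is no genuine obstacle here; the only subtlety is choosing the cycle convention for $(0\cdots i)$, which I fix as $0\mapsto 1\mapsto\cdots\mapsto i\mapsto 0$ so that $\omega(\epsilon_j)=\epsilon_{\omega(j)}$ yields the stated coefficients. With this fixed, both parts are one-line verifications after the setup above.
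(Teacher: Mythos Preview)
Your proof is correct; the paper itself gives no proof at all, stating only that the lemma ``is immediate.'' Your argument is exactly the standard verification one would supply: reducing the dot stabilizer of $\lambda$ to the ordinary stabilizer of $\lambda+\rho$ for part (a), and computing $\omega(\lambda+\rho)-\rho$ coordinatewise for part (b) with the cycle convention $0\mapsto 1\mapsto\cdots\mapsto i\mapsto 0$.
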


A weight $\lambda\in\mh_n^*$ is called $\sl_{n+1}$-dominant
 if $\lambda_0-\lambda_1,\cdots, \lambda_{n-1}-\lambda_{n}\not\in \Z_{<0}$, and $\sl_{n+1}$-integral if
$\lambda_0-\lambda_1,\lambda_1-\lambda_2,\cdots, \lambda_{n-1}-\lambda_{n}\in \Z$.
We can see that an $\sl_{n+1}$-dominant integral weight is regular. A $\lambda\in\mh_n^*$  is called dot
dominant if $\lambda+\rho$ is $\sl_{n+1}$-dominant.

Denote by $Z(\sl_{n+1})$ the center of $U(\sl_{n+1})$. An algebra homomorphism $\chi: Z(\sl_{n+1})\rightarrow \C$ is called a central character.
Let $\Theta$ be the set of all central characters of $\sl_{n+1}$.  We have a map $ \xi : \mh_n^*\rightarrow \Theta $ which maps $\mu \in \mh_n^*$ to the central character $\chi_{\mu}$ associated with the Verma module $M(\mu)$, since $\dim M(\mu)_{\mu}=1$.
By the Harish-Chandra's Theorem, the map $\xi $ is surjective, see \cite{HC}. Moreover $\chi_{\mu}\cong \chi_{\lambda}$ if and only if $\mu=\omega\cdot \lambda$ for some $\omega\in S_{n+1}$.  An $\sl_{n+1}$-module $M$ is said to  have the generalized central character $\chi$ if  $z-\chi(z)$ acts locally nilpotently on $M$ for any $z\in Z(\sl_{n+1})$.
Let $\mathcal{M}$ denote the category of finitely generated $U(\sl_{n+1})$ -modules such that the action of $Z(\sl_{n+1})$ is locally finite. For each $ \lambda\in\mh_n^*$, we use  $\mathcal{M}^{\lambda}$ to denote the full subcategory of $\mathcal{M}$ consisting of $\sl_{n+1}$-modules which have the generalized central character $\chi_{\lambda}$. We can see that  $\mathcal{M}^{\lambda}=\mathcal{M}^{\mu}$ if and only if $\mu=\omega\cdot \lambda$ for some $\omega\in S_{n+1}$.
In particular, when $\lambda=0$, the block $\mathcal{M}^{0}$ is called the principal block of $\mathcal{M}$.
 For any $\lambda\in \mh_n^*$ and any  full subcategory $\mathcal{N}$ of $\mathcal{M}$, we denote $\mathcal{N}^{\lambda}=\mathcal{N}\cap \mathcal{M}^{\lambda}$. For example,
$\ch_{\ba}^{\lambda}=\ch_\ba\cap \mathcal{M}^{\lambda}$, $\cc^{\lambda}=\cc \cap \mathcal{M}^{\lambda}$, $\mathcal{B}^{\lambda}=\mathcal{B} \cap \mathcal{M}^{\lambda}$.

\subsection{Translation functor}
 Now let us recall the translation functors in \cite{BG}. Suppose that $V$ is  a finite dimensional simple $\sl_{n+1}$-module, and $\mu,\lambda\in\mh_n^*$
satisfying that $\lambda-\mu\in \text{Supp} V$. The translation
functor $$T^{\mu,\lambda}:\mathcal{M}^{\mu}\rightarrow \mathcal{M}^{\lambda},$$
is defined by
$$T^{\mu,\lambda}(M)=(M\otimes V)^{\chi_\lambda},$$ where
$(M\otimes V)^{\chi_\lambda}$ is the direct summand of $M \otimes V$ having the generalized
 central character $\chi_\lambda$. If $\lambda, \mu$ satisfy the
 following three conditions:
 \begin{enumerate}[$($a$)$]
\item $\lambda-\mu$ belongs to the $S_{n+1}$-orbit of the highest weight of $V$;
\item  the
stabilizers of $\mu+\rho$ and $\lambda+\rho$ in the Weyl group coincide;
\item  $\mu+\rho$ and $\lambda+\rho$ lie in the same
Weyl chamber,
 \end{enumerate} then
 $T_V^{\mu,\lambda}:\mathcal{M}^{\mu}\rightarrow \mathcal{M}^{\lambda}$
is an equivalence, see \cite{BG}. We
say that two weights $\lambda,\mu\in \mh_n^*$ are in the same Weyl chamber if $\lambda-\mu$ is $\sl_{n+1}$-integral, and for any positive root $\alpha$ of $\sl_{n+1}$ such that $\lambda(h_\alpha)\in \Z$, $\lambda(h_\alpha)\in \Z_{\geq 0}$ if and only if $\mu(h_\alpha)\in\Z_{\geq 0}$, where $h_\alpha\in\mh_n$ is the dual root of $\alpha$.

 \subsection{Grantcharov-Serganova's Theorem on category $\mathcal{C}$}
For  $u\in\C^{n}$,
let $\mathcal{C}_{u}$ (resp. $\mathcal{B}_{u}$) be the full subcategory of $\mathcal{C}$ (resp. $\mathcal{B}$) consisting of modules $M$ such that $\mathrm{Supp}(M)\subseteq u+\Z^n$.  Let $\mathcal{C}_{u}^{\lambda}= \mathcal{C}_{u}\cap \mathcal{M}^{\lambda}$ and $\mathcal{B}_{u}^{\lambda}= \mathcal{B}_{u}\cap \mathcal{M}^{\lambda}$ for any $\lambda\in \mh_n^*$.

 We use $\gamma$ to denote the projection $(\oplus_{i=0}^n \C e_{ii})^* \rightarrow \mh_n^*$
with the one-dimensional kernel $\sum_{i=0}^n \epsilon_i$. For example $\gamma(c\epsilon_0)=\frac{nc}{n+1}\epsilon_0-\frac{c}{n+1}(\sum_{i=1}^n \epsilon_i)$. It is immediate that $\gamma(c\epsilon_0)$ is singular integral if and only if $c\in \{-1,\dots,-n\}$.
The following lemma was proven in \cite{GS1}.

\begin{lemma}If the category $\mathcal{C}^\lambda$ is not empty, then it is equivalent to $\mathcal{C}^{\gamma(c\epsilon_0)}$ for some $c\in (\C\setminus \Z)\cup
\{0,-1,\dots,-n\}$.
\end{lemma}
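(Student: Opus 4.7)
The plan is to combine Mathieu's classification of simple cuspidal $\sl_{n+1}$-modules (see \cite{M}) with the translation functor equivalences recalled above, and to normalize the parameter $c$ by a direct Weyl-chamber analysis on the line $c\mapsto\gamma(c\epsilon_0)$.

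First I would invoke Mathieu's theorem to restrict the possible shape of $\lambda+\rho$: if $\mathcal{C}^\lambda\neq\emptyset$ then, up to the permutation action of $S_{n+1}$, the Harish-Chandra parameter $\lambda+\rho$ has the form $(x,y,y-1,\ldots,y-(n-1))$ for some $x,y\in\C$; that is, $n$ of its $n+1$ components form an arithmetic progression of common difference $-1$. Matching this against the explicit formula for $\gamma(c\epsilon_0)+\rho$, whose components at positions $0$ and $i\geq 1$ are $\tfrac{nc}{n+1}+\tfrac{n}{2}$ and $-\tfrac{c}{n+1}+\tfrac{n-2i}{2}$ respectively, one reads off a unique $c\in\C$ with $\lambda+\rho\in S_{n+1}\cdot(\gamma(c\epsilon_0)+\rho)$. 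By Harish-Chandra's Theorem, this gives $\mathcal{C}^\lambda=\mathcal{C}^{\gamma(c\epsilon_0)}$.

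Next I would normalize $c$ using the translation functors. A direct computation with the formulas above shows that the only walls $\{(\mu+\rho)_0=(\mu+\rho)_i\}$ crossed by the line $c\mapsto\gamma(c\epsilon_0)$ occur at $c\in\{-1,\ldots,-n\}$. Hence if $c\notin\Z$, then for every $k\in\Z$ the weights $\gamma(c\epsilon_0)$ and $\gamma((c+k)\epsilon_0)$ are regular, share the trivial dot-stabilizer, and lie in the same Weyl chamber; choosing $V$ to be the $|k|$-th tensor power of the standard or dual standard representation of $\sl_{n+1}$ verifies the three conditions above and yields $\mathcal{C}^{\gamma(c\epsilon_0)}\simeq\mathcal{C}^{\gamma((c+k)\epsilon_0)}$. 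Thus in the non-integral case $c$ may be chosen as any representative in $\C\setminus\Z$ of its coset modulo $\Z$.

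For $c\in\Z$, the same wall analysis identifies $c\in\{-1,\ldots,-n\}$ as the $n$ pairwise inequivalent singular blocks, distinguished by which pair of components of $\gamma(c\epsilon_0)+\rho$ coincides (cf.~Lemma~\ref{regular}). For the integer regular values, a direct check shows that $\gamma(-(n+1)\epsilon_0)+\rho$ is a permutation of $\rho$, so $\chi_{\gamma(-(n+1)\epsilon_0)}=\chi_0$; the translation-chamber argument then reduces every integer regular $c$ to $\mathcal{C}^0$. Altogether the equivalence classes of integral blocks are exhausted by $c\in\{0,-1,\ldots,-n\}$. The main obstacle lies in Step~1, which relies on Mathieu's deep classification of simple cuspidal weight modules; once the arithmetic-progression shape of $\lambda+\rho$ is granted, the remaining normalization is a routine Weyl-chamber computation based on Lemma~\ref{regular}.
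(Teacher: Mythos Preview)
The paper does not give its own proof of this lemma; it is quoted verbatim from \cite{GS1}. That said, your sketch contains a genuine error in its first step that would need to be repaired.

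Mathieu's classification does \emph{not} force $\lambda+\rho$ to have $n$ components in arithmetic progression of common difference $-1$. What Proposition~8.5 of \cite{M} yields (and what the present paper invokes in the analogous reduction for $\ch_{\b1}$, see the proof of Lemma~\ref{equ-lemm}) is only that some $\nu\in S_{n+1}\cdot\lambda$ satisfies $\nu_1-\nu_2,\ldots,\nu_{n-1}-\nu_n\in\Z_{\geq 0}$; equivalently, the consecutive differences $(\nu+\rho)_i-(\nu+\rho)_{i+1}$ for $1\le i\le n-1$ are \emph{positive integers}, not necessarily equal to~$1$. A concrete counterexample for $\sl_3$: take $\lambda=-5\epsilon_0+5\epsilon_1+0\epsilon_2$. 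Then $\lambda_1-\lambda_2=5\in\Z_{\geq 0}$ and $L(\lambda)$ is infinite-dimensional, so Mathieu's coherent family $\mathcal{EXT}(L(\lambda))$ contains simple cuspidal modules and $\mathcal{C}^\lambda\neq\emptyset$. Yet $\lambda+\rho=(-4,5,-1)$, and no two of its components differ by~$1$, so no permutation has the shape $(x,y,y-1)$. Your ``read off a unique $c$'' step therefore fails, and the claimed equality $\mathcal{C}^\lambda=\mathcal{C}^{\gamma(c\epsilon_0)}$ via Harish-Chandra alone is unavailable.

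The correct route---and the one the paper spells out for the parallel statement about $\ch_{\b1}$ in Lemma~\ref{equ-lemm}---is to use translation functors already at this first stage. Once $\nu_1-\nu_2,\ldots,\nu_{n-1}-\nu_n\in\Z_{\geq 0}$, one chooses $c$ so that $\nu-\gamma(c\epsilon_0)$ is $\sl_{n+1}$-dominant integral (or lies in the Weyl orbit of such a weight), verifies the three Bernstein--Gelfand conditions, and obtains $\mathcal{C}^{\lambda}=\mathcal{C}^{\nu}\simeq\mathcal{C}^{\gamma(c\epsilon_0)}$ as an equivalence, not an equality. Your later chamber analysis on the line $c\mapsto\gamma(c\epsilon_0)$ is essentially correct and is then needed only to normalize integral $c$ into $\{0,-1,\ldots,-n\}$; two minor points there are that the normalization of non-integral $c$ modulo $\Z$ is unnecessary for the lemma as stated, and that ``$|k|$-th tensor power'' should read ``$|k|$-th symmetric power'', since the translation functor requires $V$ to be simple.
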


For the cuspidal category $\mathcal{C}$ over  $\mathfrak{sl}_{n+1}$, the translation functor
can not provide equivalence of the subcategories ${\mathcal
C}^{\lambda}$ of ${\mathcal C}$ for all different $\chi_{\lambda}$.
There are  three essentially different
central character types: nonintegral, regular integral and
singular integral.

 \begin{theorem}\label{GS-th}\cite{GS1} Let $\lambda\in \mh_n^*$ and $u\in\C^n$ such that $\mathcal{C}^{\lambda}_{u}$ is not empty.
 \begin{enumerate}[$($a$)$]
 \item If $\lambda$ is singular or non-integral, then $\mathcal{C}^{\lambda}_{u}$ is equivalent to the category of
finite-dimensional modules over the algebra $\C[[x]]$.
 \item    If $\lambda$ is regular integral,  then $\mathcal{C}^{\lambda}_{u}$ is  equivalent to the category of finite-dimensional locally nilpotent modules over the quiver
$$
\xymatrix{\bullet \ar@(ul,ur)[]|{y} \ar@<0.5ex>[r]^x & \bullet
\ar@<0.5ex>[l]^x \ar@<0.5ex>[r]^y & \bullet \ar@<0.5ex>[l]^y
\ar@<0.5ex>[r]^x & \ar@<0.5ex>[l]^x... \ar@<0.5ex>[r] & \bullet
\ar@<0.5ex>[l] \ar@(ul,ur)[]|{}}
$$
containing $n$ vertices with relations $xy=yx=0$.
   \end{enumerate}
 \end{theorem}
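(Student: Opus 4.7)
The plan is to combine Mathieu's classification of simple cuspidal modules via coherent families with a direct computation of $\mathrm{Ext}^1$-groups between simples inside a block. First, I would parametrize the simple objects of $\mathcal{C}^{\lambda}_{u}$. Every simple cuspidal $\mathfrak{sl}_{n+1}$-module is a subquotient of a semisimple irreducible coherent family, which in turn is a twisted localization of a simple highest weight module. In the semisimplified coherent family, the weight space of weight $\mu$ carries the simple cuspidal module whose support is $\mu+\Z\Delta$, where $\Delta$ is the root system. A case analysis on $\lambda$ then shows: if $\lambda$ is non-integral or singular, then only one $S_{n+1}$-translate of $\lambda+\rho$ lies in a cuspidal coset of $u+\Z^n$, giving a unique simple in $\mathcal{C}^{\lambda}_{u}$; if $\lambda$ is regular integral, then exactly $n$ such translates arise, producing $n$ pairwise non-isomorphic simples $L_0,L_1,\dots,L_{n-1}$ ordered along the $A_n$ Weyl chamber boundary.

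Next, I would construct explicit projective covers in $\mathcal{C}^{\lambda}_{u}$ by applying translation functors to a regular integral block, and transport the resulting structure from category $\mathcal{O}$ via the Mathieu twisted localization $\Phi$. Concretely, the functor $\Phi$ carries certain projective objects in a corresponding parabolic $\mathcal{O}$-block to projectives in $\mathcal{C}^{\lambda}_{u}$, and preserves the Loewy structure up to a controlled shift. Computing the radical filtrations of these projective covers and reading off $\dim\mathrm{Ext}^1(L_i,L_j)$ should yield the claimed quiver: two arrows $L_i\rightleftarrows L_{i+1}$ for each $0\le i\le n-2$, together with loops at the two extremal vertices that encode the ``singular wall'' extensions.

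Finally, I would verify the quadratic relations. The ``no-cycle'' relations $xy=yx=0$ should follow by showing that every length-two composition of the distinguished arrows factors through a non-existent simple—equivalently, that the corresponding Ext-Yoneda product vanishes because its source and target lie in distinct Weyl chamber strata. In the non-integral or singular case, one only has the single loop $y$ (or $x$) at one vertex, with no cancellation relations, giving the category $\C[[x]]\text{-mod}$; taking finite-dimensional modules corresponds to locally nilpotent modules of finite length over the arrow.

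The main obstacle is the structural control on projectives: in contrast to category $\mathcal{O}$, the cuspidal category does not a priori admit projective covers, so one must either pass to a pro-completion or use the twisted localization functor $\Phi$ to import projectivity from parabolic category $\mathcal{O}^{\mathfrak{p}}$. Getting $\Phi$ to preserve the block decomposition and to produce precisely the right indecomposable projectives—and then verifying that the composition of the distinguished $\mathrm{Ext}^1$-classes really is zero rather than merely nilpotent—is the technical crux of the argument.
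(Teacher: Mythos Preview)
This theorem is not proved in the present paper: it is quoted verbatim from Grantcharov--Serganova \cite{GS1} (note the citation label in the theorem heading itself), and the authors use it as a black box to describe the blocks of $\mathcal{H}_{\mathbf{1}}$ in Theorem~\ref{main-th}. So there is no ``paper's own proof'' to compare your proposal against.

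That said, your sketch is in the spirit of the actual argument in \cite{GS1}, which does proceed via Mathieu's coherent families, twisted localization, and translation functors to count simples and then determine the Ext-quiver. A few of your details are off, however. The loops in the quiver are not at ``the two extremal vertices'' but rather at the two ends of the $A_{n-1}$-type chain with $n$ vertices, and they do not come from ``singular wall extensions'' in the way you describe; in \cite{GS1} they arise from self-extensions of the boundary simples coming from deformations along the support parameter. More seriously, your plan to import projectives from parabolic category $\mathcal{O}^{\mathfrak{p}}$ via twisted localization is not how \cite{GS1} proceeds: twisted localization is exact but does not in general send projectives to projectives, and the cuspidal category genuinely lacks enough projectives. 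Grantcharov--Serganova instead work with explicit families of indecomposable modules (built from tensor products with finite-dimensional modules and explicit extensions) and compute the quiver relations directly, rather than deducing them from a Koszul or BGG-type structure transported from $\mathcal{O}$. Your ``main obstacle'' paragraph correctly identifies the difficulty, but the resolution you propose would not work as stated.
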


The above quiver has tame representation type, see \cite{Er}. It was
originally used by
Gelfand--Ponomarev in \cite{GP}  to classify
indecomposable representations of the Lorentz group.

\begin{remark}When $\lambda$ is a non-integral or a
singular integral weight, Corollary 2.6 in \cite{GS1} gives the sufficient and necessary conditions for  non emptiness of $\mathcal{C}^{\lambda}_{u}$. From this corollary, we can see that $\mathcal{C}^{\gamma(c\epsilon_0)}_{u}$ is nonempty if and only if  $$u_1+\frac{c}{n+1}, \cdots, u_n+\frac{c}{n+1}, -|u|+\frac{c}{n+1}\not\in \Z,$$  when $c\in (\C\setminus \Z)\cup
\{-1,\dots,-n\}$. By Corollary 6.14 in \cite{GS1},  if  $u_1, \dots, u_n, -|u|\not\in \Z$, then $\mathcal{C}^{0}_{u}$ is nonempty.
\end{remark}

\section{A tensor product decomposition of the localized enveloping algebra $U_S$}

  Throughout the paper, we denote $U=U(\sl_{n+1})$ and $I_{n+1}=\sum_{i=0}^n e_{ii}$.
 Let  $U_S$ be the localization of $U$ with respect to multiplicative subset $S$ generated by $e_{01},\dots, e_{0n}$. In this section, we give a tensor product decomposition of  $U_S$, which is very useful for the study of $\sl_{n+1}$-modules with bijective actions of $e_{01},\dots, e_{0n}$.

\subsection{The category  $\ch_{\b1}$}

For an $(n+1)\times (n+1)$ elementary matrix $S$, let $\sigma_S$ be the conjugate automorphism of $\sl_{n+1}$ such that $X\mapsto S^{-1}XS$ for all $X\in\sl_{n+1}$. For an $\sl_{n+1}$-module $M$, and $\sigma\in \text{Aut}(\sl_{n+1})$, $M$ can be twisted by $\sigma$ to  be a new $\sl_{n+1}$-module $M^\sigma$. As a vector space $M^\sigma=M$, whose  $\sl_{n+1}$-module
structure is defined by $X\cdot v= \sigma(X)v, \forall\ X\in \sl_{n+1}, v\in M$. We denote the vector $(1, 1,\dots, 1)\in \C^n$ by $\b1$.

\begin{lemma}\label{nonzero} If $\ba\in \C^n$ is nonzero,  then $\ch_{\ba}$ is equivalent to
$\ch_{\b1}$.
\end{lemma}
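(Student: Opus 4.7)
The plan is to exhibit the equivalence as a twisting functor coming from conjugation by a suitable block-diagonal matrix. Since $\mathbf{a}\in \mathbb{C}^n$ is nonzero, there exists an invertible matrix $A=(A_{ij})\in \mathrm{GL}_n(\mathbb{C})$ with $A\mathbf{a}=\mathbf{1}$, because any nonzero vector can be sent to $\mathbf{1}$ by some invertible linear map. Writing $\widetilde{A}=\mathrm{diag}(1,A)\in \mathrm{GL}_{n+1}(\mathbb{C})$ as a product of elementary matrices and composing the associated maps $\sigma_S$, one obtains an automorphism $\sigma_{\widetilde{A}}$ of $\mathfrak{sl}_{n+1}$ satisfying $\sigma_{\widetilde{A}}(X)=\widetilde{A}^{-1}X\widetilde{A}$. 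A short matrix computation then yields
$$\sigma_{\widetilde{A}}(e_{0i})=\sum_{j=1}^n A_{ij}\,e_{0j}\qquad \text{for } i=1,\dots,n.$$

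The candidate equivalence is the twisting functor $F\colon \mathcal{H}_{\mathbf{a}}\to \mathcal{H}_{\mathbf{1}}$ given on objects by $F(M)=M^{\sigma_{\widetilde{A}}}$ and acting as the identity on morphisms. To check that $F(M)$ actually lies in $\mathcal{H}_{\mathbf{1}}$, I would use that the $e_{0j}$'s all lie in the commutative subalgebra $\mathfrak{m}_n$ and therefore act on $M$ as pairwise commuting operators $e_{0j}=a_j\cdot \mathrm{id}+N_j$ with each $N_j$ locally nilpotent. On $F(M)$ the operator $e_{0i}$ acts as $\sigma_{\widetilde{A}}(e_{0i})=(A\mathbf{a})_i+\sum_j A_{ij}N_j=1+\sum_j A_{ij}N_j$; since a linear combination of commuting locally nilpotent operators is again locally nilpotent, $e_{0i}-1$ acts locally nilpotently on $F(M)$. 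The very same identity shows that $\mathrm{wh}_{\mathbf{a}}(M)=\mathrm{wh}_{\mathbf{1}}(F(M))$ as subspaces of the common underlying vector space, so the finite-dimensionality condition on the Whittaker space is automatically preserved.

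Twisting by $\sigma_{\widetilde{A}^{-1}}=\sigma_{\widetilde{A}}^{-1}$, which corresponds to the identity $A^{-1}\mathbf{1}=\mathbf{a}$, yields a functor $\mathcal{H}_{\mathbf{1}}\to \mathcal{H}_{\mathbf{a}}$ strictly inverse to $F$ (up to the canonical identification $(M^{\sigma})^{\sigma^{-1}}=M$), which gives the desired equivalence of categories. The only step with any substance is the preservation of local nilpotency of $e_{0i}-\mathbf{1}$ under the twist, but this reduces to the elementary fact that a linear combination of commuting locally nilpotent endomorphisms is locally nilpotent, so I do not anticipate any genuine obstacle.
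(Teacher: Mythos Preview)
Your proof is correct and follows essentially the same approach as the paper: both use conjugation by a block-diagonal matrix $\mathrm{diag}(1,A)$ with $A\in\mathrm{GL}_n(\mathbb{C})$ sending $\mathbf{a}$ to $\mathbf{1}$, the paper simply decomposing $A$ into a sequence of elementary matrices (a permutation, a scaling, then shears) rather than treating it as a single invertible map. Your version is in fact more explicit than the paper's about why the twisted module lands in $\mathcal{H}_{\mathbf{1}}$---the local-nilpotency and Whittaker-vector checks you spell out are left implicit there.
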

\begin{proof}
We twist modules in  $\ch_{\ba}$ using   conjugate automorphisms of $\sl_{n+1}$ given by elementary matrices several times.  After  suitable permutations of rows and columns, we can assume that $a_1\neq 0$. Using the isomorphism of $\sl_{n+1}$ mapping  $e_{01}$ to $\frac{1}{a_1}e_{01}$, we can suppose that $a_{1}=1$.
Adding a proper  multiple of the first column to the  $i$-th column, we can assume that
$a_i=1$ for any $2\leq i\leq n$.
\end{proof}

 From now on, we always assume that
$\ba=\b1$.

The following lemma is a key observation on modules in $\ch_{\mathbf{1}}$.

\begin{lemma}\label{free}
Any module $M$ in
$\ch_{\mathbf{\b1}}$ is a free  $U(\mh_n)$-module of finite rank, and as a vector space,
$M\cong U(\mh_n)\otimes \mathrm{wh}_{\mathbf{1}}(M)$.\end{lemma}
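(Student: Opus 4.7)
The plan is to build, from $\mh_n$ and the logarithms of the $e_{0i}$, a copy of the $n$-th Weyl algebra acting on $M$, and then to apply the standard fact that a Weyl-algebra module on which the ``position'' generators act locally nilpotently is freely generated by their common kernel over the polynomial subalgebra in the dual ``momentum'' generators. To begin, since each $e_{0i}-1$ acts locally nilpotently on $M$, the operator $e_{0i}$ is invertible on $M$ and
$$f_i := \log(e_{0i}) = \sum_{k\geq 1}\frac{(-1)^{k+1}}{k}(e_{0i}-1)^k$$
is a well-defined operator on $M$. The $f_i$'s pairwise commute, act locally nilpotently, and satisfy $\ker f_i = \ker(e_{0i}-1)$; in particular $\bigcap_{i=1}^n \ker f_i = \mathrm{wh}_{\mathbf{1}}(M)$.

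For the Weyl-algebra structure one checks that $[h, f_j]$ is a scalar for every $h\in\mh_n$. Writing $\chi_j = (\epsilon_0-\epsilon_j)|_{\mh_n}\in\mh_n^*$, so that $[h, e_{0j}] = \chi_j(h)\, e_{0j}$, one observes that $[h, e_{0j}-1] = \chi_j(h)\, e_{0j}$ commutes with $e_{0j}-1$. Hence $[h, (e_{0j}-1)^k] = k\chi_j(h)\, e_{0j}(e_{0j}-1)^{k-1}$, and summing the defining series of $f_j$ yields
$$[h,f_j] = \chi_j(h)\,e_{0j}\sum_{m\geq 0}(-(e_{0j}-1))^m = \chi_j(h)\,e_{0j}\,e_{0j}^{-1} = \chi_j(h).$$
The weights $\chi_1,\ldots,\chi_n$ are linearly independent in $\mh_n^*$ (a direct check using the relation $\sum_{i=0}^n h_i = 0$), so a basis $h'_1,\ldots,h'_n$ of $\mh_n$ dual to $(\chi_j)$ yields $[h'_i, f_j] = \delta_{ij}$, and the $h'_i$'s commute since $\mh_n$ is abelian. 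Thus $h'_1,\ldots,h'_n,f_1,\ldots,f_n$ generate a copy of the $n$-th Weyl algebra $A_n$ inside $\mathrm{End}(M)$.

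The conclusion then follows from the standard Weyl-algebra fact that for any $A_n$-module on which the $f_j$'s are commuting and locally nilpotent, the multiplication map
$$\C[h'_1,\ldots,h'_n]\otimes\mathrm{wh}_{\mathbf{1}}(M)\longrightarrow M,\qquad u\otimes v\mapsto u\cdot v,$$
is a $U(\mh_n)$-linear isomorphism. Since $\mh_n$ is abelian the polynomial algebra on the left equals $U(\mh_n)$, and $\mathrm{wh}_{\mathbf{1}}(M)$ is finite-dimensional by hypothesis, so $M$ is free of finite rank over $U(\mh_n)$ with the claimed vector-space factorisation. The main obstacle is this Weyl-algebra statement itself: surjectivity is handled by induction on the least $d$ with $f_{j_1}\cdots f_{j_{d+1}}v=0$ for all tuples, using the key identity $f_i^k(h'_i)^k w=(-1)^k k!\,w$ for $w\in\mathrm{wh}_{\mathbf{1}}(M)$ to produce ``antiderivatives'' cancelling the inductively obtained $U(\mh_n)$-preimages of the $f_j v$; injectivity is obtained by applying $f^\beta$ to a hypothetical dependence $\sum_\alpha(h')^\alpha w_\alpha=0$ for $\beta$ of maximal total degree in the support, which isolates $\pm\beta!\,w_\beta=0$ in $\mathrm{wh}_{\mathbf{1}}(M)$.
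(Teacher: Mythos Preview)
Your proof is correct and takes a genuinely different route from the paper's. The paper works directly with the ``multiplicative'' commutation relation $[h_i,e_{0j}]=-\delta_{ij}e_{0j}$: it sets $e_{\mathbf m}=\prod_i(e_{0i}-1)^{m_i}$ and $h^{\mathbf m}=\prod_i h_i^{m_i}$, fixes a graded-lexicographic order on $\Z_{\ge 0}^n$, and shows by induction on this order that $e_{\mathbf m}h^{\mathbf m}v=k_{\mathbf m}v$ with $k_{\mathbf m}\neq 0$ and $e_{\mathbf s}h^{\mathbf m}v=0$ for $\mathbf s>\mathbf m$, from which both surjectivity and linear independence of $\{h^{\mathbf m}v_i\}$ follow. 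You instead linearise the problem by passing to $f_j=\log e_{0j}$, turning the relation into the additive Weyl-algebra relation $[h'_i,f_j]=\delta_{ij}$, and then invoke the standard structure theorem for $A_n$-modules on which the annihilation operators are locally nilpotent. Your approach is more conceptual---it identifies the underlying reason the result holds (Stone--von~Neumann--type uniqueness for the Weyl algebra) rather than verifying it by a tailored induction---while the paper's argument is self-contained and avoids the logarithm trick, proceeding entirely with the original enveloping-algebra elements. Both yield the same key identity in disguise: your $f^{\mathbf m}(h')^{\mathbf m}w=(-1)^{|\mathbf m|}\mathbf m!\,w$ corresponds to the paper's $e_{\mathbf m}h^{\mathbf m}v=k_{\mathbf m}v$.
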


\begin{proof}For an $\bm\in \Z_{\geq 0}^n$, define
$$ e_{\bm}=(e_{01}-1)^{m_1}\cdots(e_{0n}-1)^{m_n},\ \ h^{\bm}=h_1^{m_1}\cdots h_n^{m_n}.$$
Note that the set $\{e_{\bm}| \bm\in\Z_{\geq 0}^n\}$ forms a basis for $U(\mm_n)$. We define the total order on $\Z_{\geq 0}^n$ such that: $\br<\bm$ if $|\br|<|\bm|$ or
$|\br|=|\bm|$ and
there is an $l\in \{1,\cdots, n\}$ such that $r_i=m_i$ when $1\leq i <l$ and $r_l< m_l$. For each $\bm$, the set $\{\br \in \Z_{\geq 0}^n\mid \br< \bm\}$ is a finite set.
For a nonzero $\bm \in \Z_{\geq 0}^n$, denote by $\bm'$ the predecessor
of $\bm$, that is, $\bm'<\bm$ and there is no $\bs\in \Z_{\geq 0}^n$ such that $\bm'<\bs<\bm$.

From $[h_i,e_{0i}]=-e_{0i}$, we have that $$e_{0i}h_i^{m_i}=(h_i+1)^{m_i}e_{0i},\ \ [e_{0i},h_i^{m_i}]=\sum_{j=1}^{m_i}\binom{m_i}{j}h_i^{m_i-j}e_{0i}.$$
Then by induction on $m_i$,  we can show that for any  nonzero $v\in \text{wh}_{\b1}(M)$,
$$(e_{0i}-1)^{m_i}h_i^{m_i}v=k_{m_i}v,\ \ (e_{0i}-1)^{s_i}h_i^{m_i}v=0,\  s_i>m_i, $$ for some nonzero $k_{m_i}\in \C$.
 Consequently  for any $\bm,\bs\in \Z_{\geq 0}^n$  and nonzero $v\in \text{wh}_{\b1}(M)$, we have that $e_{\bs} h^{\bm}v=0$ whenever $\bs>\bm$,
$e_{\bm} h^{\bm} v= k_{\bm} v$ for some nonzero scalar $k_{\bm}$.

For each $\bm \in \Z_{\geq 0}^n$,  denote by $\mathbb{I}_{\bm}$  the ideal of $U(\mm_n)$ spanned by  $e_{\bs}$ with $\bs>\bm$, and $M_{\bm}=\{w\in M\mid \mathbb{I}_{\bm}w=0\}$. It is clear that $\text{wh}_{\b1}(M)=M_{\mathbf{0}}$.
For any nonzero $w\in M$, since $M\in \mathcal{H}_{\b1}$, there is an
$\bm \in \Z_{\geq 0}^n$ such that $w\in M_{\bm}\setminus M_{\bm'}$, i.e.,
$e_{\bm} w\neq 0$ and $e_{\bs} w=0$ for any $\bs>\bm$. So $e_{\bm} w\in \text{wh}_{\b1}(M)$. By the above discussion, $e_{\bm} h^{\bm} e_{\bm} w=k_{\bm}e_{\bm} w  $.
We call $\bm$ the degree of $w$.
We use  induction on the degree  $\bm$ of $w$ to show that $w\in U(\mh_n)\text{wh}_{\b1}(M)$. Let $w'=w-\frac{1}{k_{\bm}}h^{\bm}e_{\bm}w$. Then
$$e_{\bm} w'=e_{\bm} w-\frac{1}{k_{\bm}}e_{\bm} h^{\bm}e_{\bm}w=0. $$
This implies that the degree of $w'$ is smaller than $\bm$. By the induction
hypothesis, $w' \in U(\mh_n)\text{wh}_{\b1}(M)$. Hence $w\in U(\mh_n)\text{wh}_{\b1}(M)$.

Finally we show that $M\cong U(\mh_n)\otimes \mathrm{wh}_{\mathbf{1}}(M)$. Let $\{v_i| i\in \Lambda\}$ be a basis of
$\mathrm{wh}_{\mathbf{1}}(M)$.
Suppose that
$$w:=\sum_{\mathbf{r} \leq \mathbf{m}} \sum_{i\in\Lambda} c_{\mathbf{r},i}h^{\mathbf{r}} v_i=0$$ in $M$, where $c_{\mathbf{r},i}\in \C$.
From
$e_{\bm} w=0$, we see that $c_{\bm,i}=0$. Then  by induction on $\bm$, we have that
$c_{\br,i}=0$ for any $\br < \bm$ and $i$. Thus  $\{ h^{\mathbf{m}}v_i\mid \mathbf{m}\in \Z_{\geq 0}^n, i\in \Lambda\}$ is linearly independent. In conclusion, we complete the proof.
\end{proof}

We will use the theory of finite $W$-algebra to study $\ch_{\mathbf{\b1}}$. If we denote
$$\mg(0)=\ml_n,\ \  \mg(2)=\mm_n^-, \ \ \mg(-2)=\mm_n,$$ then
$$\sl_{n+1}=\mg(-2)\oplus\mg(0)\oplus \mg(2)$$ is a good $\Z$-grading
for the minimal nilpotent element $e=e_{10}+\dots+e_{n0}$, see \cite{EK}. That is $e\in \mg(2)$,
$\text{ad}e: \mg(0)\rightarrow\mg(2)$ is surjective, and $\text{ad}e: \mg(-2)\rightarrow\mg(0)$ is injective. The map $\theta:\mm_n \rightarrow \C, X\mapsto \text{tr}(Xe)$ defines a one dimensional
$\mm_n$-module $\C_{\b1}:=\C v_{\b1}$. It is clear that $\theta(e_{0i})=1$,  for any $1\leq i \leq n$.
Define  the induced $\sl_{n+1}$-module
(called generalized Gelfand-Graev module)
$$Q_{\b1} := U(\sl_{n+1}) \otimes_{U(\mm_n)}\C_{\b1}.$$

The finite $W$-algebra $ W(e)$ is defined to be
the endomorphism algebra
$$W(e) := \text{End}_{\sl_{n+1}}(Q_{\b1} )^{\text{op}}. $$
This is the original definition of finite $W$ algebras defined by Premet, see \cite{Pr1, BK} and \cite{W}. By the Skryabin's equivalence \cite{Pr1},
the functors $$M\mapsto \text{wh}_{\b1}(M),
\ \ \ \  V\mapsto Q_{\b1}\otimes_{W(e)} V, $$ are inverse equivalence between  $\ch_{\b1}$ and the category of finite dimensional $W(e)$-modules.

\subsection{An explicit realization of the finite $W$-algebra $ W(e)$}
In this subsection, we  give an explicit realization of the finite $W$-algebra $ W(e)$.

Since each $\text{ad}e_{0i}$ is locally nilpotent on $U$, the set
   $$S:=\{e_{01}^{i_1}\dots e_{0n}^{i_n}\mid i_1,\dots, i_n\in\Z_{\geq0}\}$$ is an Ore subset of $U$, see Lemma 4.2 in \cite{M}.
   Denote
 by $U_{S}$  the localization $U$ with respect to $S$.
 Mathieu has used $U_{S}$  to study simple cuspidal modules.
 We will give a tensor product decomposition of  $U_{S}$.

 Let $W:=\{u\in U_S\mid [\mh_n, u]=[\mm_n,u]=0\}$ which is a subalgebra of $U_S$.
 Define the following elements in $U_S$:
 \begin{equation}\label{x-w-def}
 \aligned x_{ij}&=e_{ij}e_{0i}e_{0j}^{-1}-h_i,\\ \omega_k&=e_{k0}e_{0k}+\sum_{j=1}^n
 (e _{kj}-\frac{\delta_{jk}}{n+1}I_{n+1})(h_j-1)e_{0k}e_{0j}^{-1}, \endaligned\end{equation}for all $i,j, k=1,\dots,n$ with
 $i\neq j$. In the following lemma, we show that these elements belong to $W$.

 \begin{lemma}\label{comm}
 For any $i,j, k\in\{1,\dots,n\}$, $x_{ij}, \omega_k\in W$.
 \end{lemma}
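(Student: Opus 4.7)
The lemma reduces to verifying $[h_l, x_{ij}] = [e_{0l}, x_{ij}] = 0$ and $[h_l, \omega_k] = [e_{0l}, \omega_k] = 0$ for every $l \in \{1,\dots,n\}$, because $\mh_n \oplus \mm_n$ is spanned by the $h_l$ and $e_{0l}$ and $\mm_n$ is abelian. I would dispose of the $\mh_n$-commutation uniformly by a weight argument: in $U_S$ the element $e_{0j}^{-1}$ is a weight vector of weight $-(\epsilon_0-\epsilon_j)$, so the monomial $e_{ij}e_{0i}e_{0j}^{-1}$ has total weight $(\epsilon_i-\epsilon_j)+(\epsilon_0-\epsilon_i)-(\epsilon_0-\epsilon_j)=0$, and each summand of $\omega_k$ is $\mh_n$-weight zero by the same count. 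The Cartan factors $h_i$ and $h_j-1$ are themselves in $\mh_n$, so nothing else needs to be checked on that side.

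For $[e_{0l}, x_{ij}]$ I would apply the Leibniz rule, use $[e_{0l}, e_{0i}]=[e_{0l}, e_{0j}^{-1}]=0$, and invoke the matrix-unit relations $[e_{0l}, e_{ij}] = \delta_{li}\,e_{0j}$ (which holds because $j \neq 0$) and $[e_{0l}, h_i] = \delta_{li}\,e_{0l}$. The only nonvanishing case is $l=i$, where the two contributions cancel after using $e_{0j}e_{0i}e_{0j}^{-1}=e_{0i}$. This takes only a couple of lines.

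The main calculation is $[e_{0l}, \omega_k]=0$. My plan is to expand by Leibniz, kill the interior $\mm_n$-brackets, and reduce to
$$[e_{0l}, e_{k0}]\,e_{0k} + \sum_{j=1}^n\bigl([e_{0l}, \tilde e_{kj}](h_j-1) + \tilde e_{kj}[e_{0l}, h_j]\bigr)e_{0k}e_{0j}^{-1},$$
where $\tilde e_{kj} := e_{kj} - \frac{\delta_{jk}}{n+1}I_{n+1}$. Three small identities should finish the job: (i) $[e_{0l}, e_{k0}] = \delta_{lk}e_{00} - e_{kl}$, which is where the ``off-diagonal'' term $-e_{kl}e_{0k}$ comes from; (ii) $e_{0j}(h_j-1) = h_j e_{0j}$, coming from $[h_j, e_{0j}] = -e_{0j}$, which is precisely the reason for the shift $h_j-1$ in the definition of $\omega_k$ and which lets $e_{0j}$ cancel against $e_{0j}^{-1}$ inside the sum; and (iii) $\sum_{j=0}^n h_j = 0$ in $\sl_{n+1}$, used to rewrite $\sum_{j=1}^n h_j = -e_{00} + \frac{1}{n+1}I_{n+1}$. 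Once the $\delta_{lk}$ and $\delta_{lj}$ contributions are collected with these, I expect the total to simplify to $\bigl(-e_{kl} + \tilde e_{kl} + \frac{\delta_{lk}}{n+1}I_{n+1}\bigr)e_{0k}$, which vanishes by the very definition of $\tilde e_{kl}$.

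The main obstacle is the bookkeeping in the $\omega_k$ computation: three Kronecker deltas (in $lk$, $lj$, $jk$) interact, the non-semisimple correction $\frac{1}{n+1}I_{n+1}$ has to be tracked, and the identities $e_{0j}(h_j-1) = h_je_{0j}$ and $\sum h_j = 0$ must be invoked at the right moments. There is no conceptual obstacle here: the formula defining $\omega_k$ is engineered precisely so that all of these pieces cancel exactly, and watching that cancellation happen is really a sanity check on the formula itself.
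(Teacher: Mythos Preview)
Your proposal is correct and follows essentially the same computational route as the paper. The only difference is in presentation: for the $\mh_n$-commutation you use a uniform weight argument (cleaner than the paper's direct check), and for $[e_{0l},\omega_k]$ you apply Leibniz and isolate the three identities $[e_{0l},e_{k0}]=\delta_{lk}e_{00}-e_{kl}$, $e_{0j}(h_j-1)=h_j e_{0j}$, and $\sum_{j=1}^n h_j=-e_{00}+\frac{1}{n+1}I_{n+1}$, whereas the paper expands $e_{0q}\omega_k$ and $\omega_k e_{0q}$ separately before subtracting; the resulting cancellation is the same in both.
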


 \begin{proof} First, it can be seen that $$\aligned \ [h_i, \omega_k]=0,\ \ [h_k, x_{ij}]=[e_{kk}, e_{ij}e_{0i}e_{0j}^{-1}-e_{ii}]=0,
 \endaligned$$ and
 $$\aligned \ [e_{0k}, x_{ij}]&=[e_{0k}, e_{ij}e_{0i}e_{0j}^{-1}-e_{ii}]
 =\delta_{ki}e_{0j}e_{0i}e_{0j}^{-1} -\delta_{ki}e_{0i}
 =0,
 \endaligned$$ for any $k\in\{1,\dots,n\}$. So $x_{ij}\in W$.

 We can also compute that
 $$\aligned \   e_{0q}\omega_k& = e_{0q}\Big( e_{k0}e_{0k}
 +\sum_{j=1}^ne_{kj}(h_j-1)e_{0k}e_{0j}^{-1}-\frac{1}{n+1}I_{n+1}(h_k-1)\Big)\\
&= \delta_{qk}e_{00}e_{0k}-e_{kq}e_{0k}+e_{k0}e_{0k}e_{0q}
+\delta_{qk}\sum_{j=1}^ne_{0j}(h_j-1)e_{0k}e_{0j}^{-1}\\
&\ \ \ \ +\sum_{j=1\atop j\neq q}^ne_{kj}(h_j-1)e_{0k}e_{0j}^{-1}e_{0q}+ e_{kq}h_qe_{0k}-\frac{1}{n+1}I_{n+1}(h_k-1+\delta_{qk})e_{0q}.
\endaligned$$

From
$$\aligned \omega_ke_{0q}=e_{k0}e_{0k}e_{0q}
 +\sum_{j=1}^ne_{kj}(h_j-1)e_{0k}e_{0j}^{-1}e_{0q}-\frac{1}{n+1}I_{n+1}(h_k-1)e_{0q},
\endaligned$$ it can be checked that
$$\aligned \   e_{0q}\omega_k- \omega_ke_{0q}
&= \delta_{qk}e_{00}e_{0k}-e_{kq}e_{0k}
+\delta_{qk}\sum_{j=1}^ne_{0j}(h_j-1)e_{0k}e_{0j}^{-1}\\
&\ \ \ \ -e_{kq}(h_q-1)e_{0k}+ e_{kq}h_qe_{0k}-\delta_{qk}\frac{1}{n+1}I_{n+1}e_{0q}\\
&=\delta_{qk}e_{00}e_{0k}
+\delta_{qk}\sum_{j=1}^nh_je_{0k}-\delta_{qk}\frac{1}{n+1}I_{n+1}e_{0k}\\
&= \delta_{qk}\sum_{j=0}^nh_je_{0k}=0.
\endaligned$$
The proof is completed.
 \end{proof}

 Let $B$ be the subalgebra of $U_{S}$ generated by $h_{i}, e_{0k}, e_{0k}^{-1}, 1\leq k,i\leq n$. In the next theorem, we show that $W$ is a tensor factor of the localized enveloping algebra
$U_{S}$.

 \begin{theorem}\label{main-iso-th}  \begin{enumerate}[$($a$)$]
 \item $U_{S}\cong W\otimes  B$, $Z(U_{S})\cong Z(W)$.
 \item All the ordered monomials in $x_{ij}, \omega_k $, $i, j,k=1,\dots, n $ with
  $i\neq j$ form a basis of $W$
over $\C$.
   \end{enumerate}
\end{theorem}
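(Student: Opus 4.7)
The plan is to construct the multiplication map $\Phi : W \otimes B \to U_S$, $w \otimes b \mapsto wb$, prove it is an algebra isomorphism, and then deduce (b) together with the statement $Z(U_S) \cong Z(W)$ from the auxiliary fact $Z(B) = \C$. Well-definedness of $\Phi$ is automatic: $W$ is by definition the centralizer of $\mh_n \oplus \mm_n$ in $U_S$, so elements of $W$ commute with each $h_i$ and each $e_{0i}^{\pm 1}$, hence with all of $B$.

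For surjectivity I would invert \eqref{x-w-def}. The first relation gives $e_{ij} = (x_{ij} + h_i)e_{0j}e_{0i}^{-1}$ at once. Substituting this into the definition of $\omega_k$, using the identity $e_{0j}(h_j-1) = h_j e_{0j}$ and the fact that $e_{0k}^{\pm 1}$ commutes with $h_j$ for $j \neq k$, telescopes the off-diagonal sum to $\sum_{j\neq k}(x_{kj}+h_k)h_j$, so $\omega_k = e_{k0}e_{0k} + \sum_{j \neq k} x_{kj}h_j + h_k\bigl(\sum_{i=1}^{n} h_i - 1\bigr)$; thus $e_{k0} \in W \cdot B$. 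Since $U_S$ is generated by $\{e_{k0},\, e_{ij} : 1 \leq i \neq j \leq n\}$ together with $B$, $\Phi$ is surjective.

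For injectivity and part (b) I would filter $U_S$ by $F_p$, placing $\{e_{k0},\, e_{ij} : 1 \leq i \neq j \leq n\}$ in degree one and the generators of $B$ in degree zero. PBW together with Ore localization realizes $U_S$ as a free right $B$-module on the ordered monomials in these $n^2$ generators. In $\mathrm{gr}\, U_S$, the symbols $\sigma(x_{ij}) = \bar{e}_{ij} \cdot e_{0i}e_{0j}^{-1}$ and $\sigma(\omega_k) = \bar{e}_{k0}\cdot e_{0k} + \sum_{j \neq k}\bar{e}_{kj}\cdot (h_j-1)e_{0k}e_{0j}^{-1}$ are $n^2$ elements of $F_1/F_0$, and—after ordering the new generators as $\omega_k$ then $x_{ij}$, and the old ones as $\bar{e}_{k0}$ then $\bar{e}_{ij}$—the transition matrix is block upper triangular with units $e_{0k}$ and $e_{0i}e_{0j}^{-1}$ on the diagonal. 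A standard PBW-type induction on filtration degree, combined with the surjectivity already established, then produces ordered monomials in $\{x_{ij}, \omega_k\}$ as a free right $B$-basis of $U_S$, which is exactly the statement that $\Phi$ is bijective.

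To conclude I would verify $Z(B) = \C$ by direct computation: writing $z = \sum c_{\bm,\br} h^{\bm} e_0^{\br}$, centrality under each $h_i$ kills all terms with $r_i \neq 0$, and then centrality under $e_{0k}$ strictly decreases the top $h_k$-degree, forcing $z \in \C$. Given any $w \in W$, expanding $w = \sum f_\alpha b_\alpha$ in the above $B$-basis, the fact that each $f_\alpha$ already commutes with $B$ combined with $[w,B]=0$ and the freeness forces $b_\alpha \in Z(B) = \C$, proving (b); applied to $Z(U_S)$ the same argument gives $Z(U_S) \cong Z(W) \otimes Z(B) = Z(W)$. The main obstacle I anticipate is the PBW-lifting step: because $\bar{e}_{k0}$ does not commute with $h_i$ in $\mathrm{gr}\, U_S$, the naive polynomial-algebra picture for the graded is not available, so one must instead exploit the block triangularity of the change-of-basis matrix together with induction on filtration degree, carefully tracking that the commutators $[\omega_k,\omega_{k'}]$ and $[\omega_k,x_{ij}]$ land in $F_1$ so that ordered products behave as expected on passing to symbols.
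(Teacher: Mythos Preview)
Your proposal is correct and follows essentially the same approach as the paper: both construct the multiplication map $W\otimes B\to U_S$, prove surjectivity by inverting the defining formulas for $x_{ij}$ and $\omega_k$, and establish injectivity via a PBW/leading-term argument with respect to degree in $e_{ij},e_{k0}$. Your treatment is in fact more explicit than the paper's in two respects --- you push the inversion of $\omega_k$ one step further to get $e_{k0}$ purely in terms of $\omega_k,x_{kj}$ and $B$, and your centralizer argument (expanding $w\in W$ in the $B$-basis and forcing the coefficients into $Z(B)=\C$) gives a clean justification that the ordered monomials actually span all of $W$, a point the paper's one-line proof of (b) leaves implicit.
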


\begin{proof}(a)
  Since $[W, B]=0$, by (\ref{x-w-def}),
we have an algebra homomorphism
  $$\tau: W\otimes  B\rightarrow U_{S},$$ such that
  $$\aligned \tau(x_{ij})&=e_{ij}e_{0i}e_{0j}^{-1}-h_i,\\ \tau(\omega_k)&=e_{k0}e_{0k}+\sum_{j=1}^n
 (e _{kj}-\frac{\delta_{jk}}{n+1}I_{n+1})(h_j-1)e_{0k}e_{0j}^{-1},
  \endaligned$$
and $\tau|_{B}=\text{id}_{B}$.

From
\begin{equation}\label{ws-relation}\aligned\ e_{ij}& =x_{ij}e_{0j}e_{0i}^{-1}+h_ie_{0j}e_{0i}^{-1}\in WB, \ \ i\neq j,\\
  e_{k0}&= \omega_ke_{0k}^{-1}-\sum_{j=1}^n
  (e _{kj}-\frac{\delta_{jk}}{n+1}I_{n+1})(h_{j}-1)e_{0j}^{-1}\in WB,
  \endaligned\end{equation}
we see that $\tau$ is surjective. Note that $\tau$ maps a polynomial
 in $x_{ij}, \omega_k$ to a polynomial in $e_{ij}, e_{k0}, e_{0k}, h_l$.  By the PBW Theorem on $U_S$, examining  the highest degree term containing $e_{ij}, e_{k0}$ in $\tau(u)$ for any polynomial $u$
 in $x_{ij}, \omega_k$,  we can obtain that
 $\tau|_W$ is injective. Therefore  $\tau$ is injective.

   The second assertion follows from  that $Z(U_{S})\cong Z(B)\otimes Z(W)$ and $Z(B)=\C$.

  (b) By (\ref{ws-relation}) and the PBW Theorem on $U_S$, the claim of (b) follows.
\end{proof}

 \begin{theorem}\label{iso-thw} We have the algebra isomorphism $W\cong W(e)$.
\end{theorem}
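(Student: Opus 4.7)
The plan is to construct an explicit algebra homomorphism $\Psi\colon W\to W(e)$ by letting $W\subset U_S$ act on the universal Whittaker module $Q_{\b1}$, and then to read off its bijectivity from the tensor decomposition $U_S=W\otimes B$ of Theorem~\ref{main-iso-th}.

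First I would observe that $Q_{\b1}\in\ch_{\b1}$: the cyclic generator $v_{\b1}$ is a Whittaker vector and each $e_{0i}-1$ acts locally nilpotently on $Q_{\b1}$, so each $e_{0i}$ is locally invertible and the $U$-action extends uniquely to a $U_S$-action. For any $w\in W$, since $[e_{0i},w]=0$ we have $(e_{0i}-1)wv_{\b1}=w(e_{0i}-1)v_{\b1}=0$, so $wv_{\b1}\in\mathrm{wh}_{\b1}(Q_{\b1})$. I would define $\phi_w\in\mathrm{End}_U(Q_{\b1})$ to be the unique $U$-linear extension of $v_{\b1}\mapsto wv_{\b1}$, and set $\Psi(w):=\phi_w$, viewed as an element of $W(e)=\mathrm{End}_U(Q_{\b1})^{\mathrm{op}}$. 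To verify that $\Psi$ is an algebra homomorphism, I would use that the $U$-action on $Q_{\b1}$ extends uniquely to a $U_S$-action, so every $U$-linear endomorphism of $Q_{\b1}$ is automatically $U_S$-linear; hence left multiplication by any element of $U_S\supset W$ commutes with $\phi_{w'}$ for every $w'\in W$. This yields $(\phi_{w_2}\circ\phi_{w_1})(v_{\b1})=\phi_{w_2}(w_1 v_{\b1})=w_1\phi_{w_2}(v_{\b1})=w_1 w_2 v_{\b1}=\phi_{w_1 w_2}(v_{\b1})$, which under the opposite multiplication of $W(e)$ reads $\Psi(w_1)\Psi(w_2)=\Psi(w_1 w_2)$.

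The heart of the argument is bijectivity. Setting $\mm_\theta:=\sum_{i=1}^n\C(e_{0i}-1)$, one has $Q_{\b1}\cong U_S/U_S\mm_\theta$. Since $\mm_\theta\subset B$ and $W$ commutes with $B$ inside $U_S=W\otimes B$, the ideal factors as $U_S\mm_\theta=W\otimes B\mm_\theta$, and therefore $Q_{\b1}\cong W\otimes(B/B\mm_\theta)$ as left $U_S$-modules, with $W$ acting on the first factor. A direct computation using $[h_i,e_{0j}]=-\delta_{ij}e_{0j}$ identifies $B/B\mm_\theta\cong U(\mh_n)$ as vector spaces, with each generator $e_{0i}$ acting on $U(\mh_n)$ by the shift $p(h_1,\dots,h_n)\mapsto p(h_1,\dots,h_i+1,\dots,h_n)$. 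The Whittaker condition $e_{0i}v=v$ then forces the $U(\mh_n)$-component to be constant, yielding $\mathrm{wh}_{\b1}(Q_{\b1})\cong W\otimes\C\cong W$ via $w\mapsto wv_{\b1}$. Because any $\phi\in W(e)$ is determined by $\phi(v_{\b1})\in\mathrm{wh}_{\b1}(Q_{\b1})$, this identification forces $\Psi$ to be bijective.

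The main obstacle I anticipate is the bookkeeping in identifying $B/B\mm_\theta$ with $U(\mh_n)$ as a left $B$-module and nailing down the shift action of the generators $e_{0i}$; once that is in place, the theorem is a formal consequence of Theorem~\ref{main-iso-th} together with the defining property of $W(e)$.
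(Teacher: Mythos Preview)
Your argument is correct, with one small slip: $Q_{\b1}$ does not belong to $\ch_{\b1}$ as defined in the paper, since $\mathrm{wh}_{\b1}(Q_{\b1})\cong W$ is infinite dimensional; it lies only in the larger category $\widetilde{\ch}_{\b1}$. This is harmless, because all you actually use is that each $e_{0i}-1$ acts locally nilpotently, so the $U$-action extends uniquely to $U_S$.

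Your construction of $\Psi$ coincides with the paper's map: the authors also send $w\in W$ to the endomorphism $v_{\b1}\mapsto w\,v_{\b1}$, recording this on the generators $x_{ij},\omega_k$ as $\Theta_{i,j},\Theta_k$. The difference lies in the bijectivity argument. The paper observes that $x_{ij}v_{\b1}=(e_{ij}-h_i)v_{\b1}$ and $\omega_k v_{\b1}=(e_{k0}+\sum_j e_{kj}(h_j-1))v_{\b1}$ have leading terms forming a basis of the centralizer $\mg^e$, and then invokes Premet's PBW theorem for finite $W$-algebras (Theorem~4.6 of \cite{Pr1}) to conclude that the monomials in $\Theta_{i,j},\Theta_k$ span $W(e)$; combined with Theorem~\ref{main-iso-th}(b), this matches bases on both sides. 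Your route instead computes $\mathrm{wh}_{\b1}(Q_{\b1})$ directly from the factorization $U_S\cong W\otimes B$ and the identification $B/B\mm_\theta\cong U(\mh_n)$, yielding $\mathrm{wh}_{\b1}(Q_{\b1})\cong W$ without appeal to \cite{Pr1}. Your argument is thus more self-contained and makes the homomorphism property explicit via $U_S$-linearity of $U$-endomorphisms; the paper's version, on the other hand, pins down exactly how the generators $x_{ij},\omega_k$ match Premet's canonical PBW generators of $W(e)$, which is useful for the later computations in Section~4.
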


\begin{proof}
First, we can see that  $$\{e_{ij}-h_i, e_{k0}\mid 1\leq i, j,k \leq n, i\neq j \}$$
  is a basis of the centralizer of $e$ in  $\sl_{n+1}$.
  By Lemma \ref{comm},
$$\aligned x_{ij}v_{\b1}&=(e_{ij}-h_i)v_{\b1}, \ i\neq j \\
 \omega_k v_{\b1}& =(e_{k0}+\sum_{j=1}^ne_{kj}(h_{j}-1))v_{\b1}, \endaligned$$
 are Whittaker vectors of the module $Q_{\b1}$. So there are $\Theta_{i,j}, \Theta_k \in \text{End}_{\sl_{n+1}}(Q_{\b1} )$
 such that $\Theta_{i,j}(v_{\b1})= x_{ij}v_{\b1}$ and $ \Theta_k(v_{\b1})=\omega_k v_{\b1}$. By Theorem 4.6 in \cite{Pr1}, the monomials in $\Theta_{i,j}, \Theta_k$, $1\leq i, j,k \leq n$ with $i\neq j$ form a basis of $W(e)$. By (b) in Theorem \ref{main-iso-th}, the map $W\rightarrow  W(e)$ such that
 $$x_{ij}\mapsto \Theta_{i,j}, \ \ \omega_k\mapsto \Theta_k, $$
 defines an isomorphism.
\end{proof}

 \section{Characterizations of all blocks of  $\ch_{\b1}$}

Let $W\text{-mod}$ be the category of finite dimensional $W$-modules. In this section, using  $W$-modules, we will classify simple objects in $\ch$ and
find equivalent relations between $\ch$ and $\mathcal{C}$.

\subsection{$\ch_{\b1}$ and $W$-modules}
By the Skryabin's equivalence \cite{Pr1} and the isomorphism between
 $W$ and $W(e)$, the category $\ch_{\b1}$ is equivalent to
 $W\text{-mod}$. Next we construct explicit equivalent functor
 between $\ch_{\b1}$ and
 $W\text{-mod}$.

We define the functor $F: \ch_{\b1}\rightarrow W\text{-mod}$ such that $F(M)=\text{wh}_{\b1}(M)$. From $[W, e_{0i}]=0$ and that
$\text{wh}_{\b1}(M)$ is finite dimensional,
 $F(M)$ is  a finite dimensional $W$-module. Conversely, for  a $V\in W\text{-mod}$, let each $e_{0i}$
 act  identically on $V$.
 Let $$G(V)=\text{Ind}_{U(\mm_n)_SW}^{U_S}V=U_{S}\otimes_{U(\mm_n)_SW} V.$$ It is clear that $G(V)=U(\mh_n)\otimes V$.
 From  (\ref{ws-relation}),
  the  $\sl_{n+1}$-module structure on $G(V)$ satisfies that:
 $$\aligned \ h_i \cdot (f(h)\otimes v)&= (h_if(h))\otimes v,\\
 e_{0i}\cdot (f(h)\otimes v)&=  f(h+e_i)\otimes v,
  \endaligned$$ and
 $$\aligned
 e_{ij}\cdot (f(h)\otimes v)&=f(h-e_i+e_j)\otimes x_{ij}v
 +h_if(h-e_i+e_j)\otimes v,  \\
  e_{k0}\cdot (f(h)\otimes v)&=f(h-e_k)\otimes \omega_k v -\sum_{j=1}^n
   (e _{kj}-\frac{\delta_{jk}}{n+1}I_{n+1})\cdot (h_j-1)f(h-e_j)\otimes v,
 \endaligned$$
 where $i, j, k \in \{1,\dots, n\}$ with $i\neq j$, $f(h)\in U(\mh_n)$
 and $v\in V, f(h+e_i)=f(h_1,\dots, h_i+1,\dots, h_n)$.
Then we obtain a functor $G: W\text{-mod} \rightarrow \ch_{\b1}$ such that $G(V)=U(\mh_n)\otimes V$.

\begin{proposition}\label{equ-hw} We have that $FG=\text{id}_{W\text{-mod} }$ and $GF=\text{id}_{\ch_{\b1} }$.
 So $\ch_{\b1}$ is equivalent to $W\text{-mod}$.
\end{proposition}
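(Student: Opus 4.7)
The plan is to verify both equalities $FG=\mathrm{id}_{W\text{-mod}}$ and $GF=\mathrm{id}_{\ch_{\b1}}$ by direct computation. The unifying underlying observation is that any $M\in\ch_{\b1}$ is automatically a $U_S$-module: each $e_{0i}=1+(e_{0i}-1)$ acts invertibly on $M$ because $e_{0i}-1$ is locally nilpotent. Consequently the tensor product decomposition $U_S=W\otimes B$ from Theorem \ref{main-iso-th} and the identities (\ref{ws-relation}) can be applied directly to elements of $M$.

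First I would verify $FG=\mathrm{id}_{W\text{-mod}}$. Given $V\in W\text{-mod}$, the defining action of each $e_{0i}$ on $G(V)=U(\mh_n)\otimes V$ shifts the argument $h_i$ by a unit and leaves the $V$-factor unchanged. Writing an arbitrary element as $\sum_j f_j(h)\otimes v_j$ with the $v_j$ linearly independent in $V$, the equations $(e_{0i}-1)(\sum_j f_j(h)\otimes v_j)=0$ for all $i$ force each $f_j$ to be a constant, so $\mathrm{wh}_{\b1}(G(V))=1\otimes V$, identified with $V$ via $1\otimes v\mapsto v$. That this identification intertwines the $W$-actions is immediate from the action formulas defining $G(V)$: since $x_{ij},\omega_k\in W$ commute with $\mh_n$ and $\mm_n$, evaluating the formulas at $f(h)=1$ collapses them to $x_{ij}\cdot(1\otimes v)=1\otimes x_{ij}v$ and $\omega_k\cdot(1\otimes v)=1\otimes\omega_k v$.

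For $GF=\mathrm{id}_{\ch_{\b1}}$, given $M\in\ch_{\b1}$ I define the natural map
\[\Phi\colon G(F(M))=U(\mh_n)\otimes\mathrm{wh}_{\b1}(M)\longrightarrow M,\qquad f(h)\otimes v\longmapsto f(h)v.\]
Lemma \ref{free} is precisely the statement that $\Phi$ is a vector space isomorphism, so only the $\sl_{n+1}$-linearity of $\Phi$ remains. I would check this on the generators $h_i,e_{0i},e_{ij},e_{k0}$. The cases of $h_i$ and $e_{0i}$ reduce to the standard commutation relations in $U(\mh_n\oplus\mm_n)$. For $e_{ij}$ with $i\neq j$ and for $e_{k0}$, I rewrite each via (\ref{ws-relation}) as an element of $WB\subset U_S$, let it act on $f(h)v\in M$, and exploit three facts: that $x_{ij}v$ and $\omega_k v$ again lie in $\mathrm{wh}_{\b1}(M)$ by Lemma \ref{comm}, that $e_{0p}^{\pm 1}$ acts as the identity on $\mathrm{wh}_{\b1}(M)$, and that polynomials in $h_\ell$ pass through the $e_{0p}$ by the standard commutation rules. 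A direct comparison then shows the result matches the action formulas defining $G(F(M))$.

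The main obstacle is bookkeeping rather than conceptual: the formula for $e_{k0}$ in (\ref{ws-relation}) is a sum over $j$ of products involving $(h_j-1)$, $e_{0j}^{-1}$ and $e_{kj}$, where the $e_{kj}$ themselves must be re-expanded using $x_{kj}$, and the shifts of $h$ introduced when moving $e_{0p}^{\pm 1}$ past a polynomial in $h$ must be tracked consistently. Once this calculation is carried out, $\Phi$ is an $\sl_{n+1}$-isomorphism and the two functors $F,G$ are mutually inverse, yielding the claimed equivalence $\ch_{\b1}\simeq W\text{-mod}$.
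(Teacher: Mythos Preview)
Your proposal is correct and follows essentially the same approach as the paper: both use Lemma~\ref{free} for the vector-space identification $M\cong U(\mh_n)\otimes\mathrm{wh}_{\b1}(M)$, and both invoke the relations~(\ref{ws-relation}) together with the decomposition $U_S\cong W\otimes B$ to upgrade this to an $\sl_{n+1}$-isomorphism. The paper's version compresses the $\sl_{n+1}$-linearity check into a single sentence, while you spell out the computation on generators; your explicit observation that $M$ is automatically a $U_S$-module (since each $e_{0i}-1$ is locally nilpotent) is the point the paper leaves implicit.
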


\begin{proof}
 Since $FG(V)=\text{wh}_{\mathbf{\b1}}(U(\mh_n)\otimes V)\cong V$ for any $V\in W\text{-mod} $,
 we have $FG\cong\text{id}_{W\text{-mod} }$.
For any $M\in \ch_{\mathbf{1}}$, by Lemma \ref{free}, as a vector space, $M\cong U(\mh_n)\otimes \mathrm{wh}_{\b1}(M)$. By the formula (\ref{ws-relation}) and the isomorphism $U_{S}\cong W\otimes  B$,
we have that $M\cong G(\mathrm{wh}_{\b1}(M))\cong GF(M)$. So  $GF\cong\text{id}_{\ch_{\b1} }$.
\end{proof}

By Theorem $\ref{main-iso-th}$, $Z(U_S)$
 can be canonically identified
with the center $Z(W)$ of $W$. For any $\lambda\in\mh_n^*$,
let $W^\lambda\text{-mod}$ be the full subcategory of $W\text{-mod}$ consisting of modules having the generalized central character $\chi_{\lambda}$. By Proposition \ref{equ-hw}, we the following corollary.

\begin{corollary}\label{p7} For any $\lambda\in\mh_n^*$, $\ch^\lambda_{\b1}$ is equivalent to $W^\lambda\text{-mod}$.
\end{corollary}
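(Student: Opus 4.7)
The plan is to derive Corollary \ref{p7} directly from Proposition \ref{equ-hw} by verifying that the inverse equivalences $F$ and $G$ there respect the generalized central character $\chi_\lambda$. The pivotal fact that makes this go through is the second part of Theorem \ref{main-iso-th}, namely $Z(U_S)\cong Z(W)$, combined with the observation that any module in $\ch_{\b1}$ canonically extends to a $U_S$-module.

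First I would record that every $M\in\ch_{\b1}$ carries a natural $U_S$-module structure: since $e_{0i}-1$ acts locally nilpotently and $e_{0i}=1+(e_{0i}-1)$, the operator $e_{0i}$ is bijective on each finite-dimensional $U(\mm_n)$-stable subspace, hence bijective on $M$. Consequently, the $U(\sl_{n+1})$-action on $M$ factors through $U_S$, and the $Z(U(\sl_{n+1}))$-action on $M$ factors through the embedding $Z(U(\sl_{n+1}))\hookrightarrow Z(U_S)\cong Z(W)$ from Theorem \ref{main-iso-th}. Under this identification the central character $\chi_\lambda$ of $U(\sl_{n+1})$ becomes a central character of $W$, and this is the sense in which the symbol $\chi_\lambda$ is used for both $\ch_{\b1}^\lambda$ and $W^\lambda\text{-mod}$.

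Next I would check that $F$ sends $\ch_{\b1}^\lambda$ into $W^\lambda\text{-mod}$. Since $W$ centralizes $\mm_n$, the subspace $F(M)=\mathrm{wh}_{\b1}(M)$ is $W$-stable, and the $W$-action on $F(M)$ is simply the restriction of the $U_S$-action on $M$. Hence for any $z\in Z(W)$, the operator $z-\chi_\lambda(z)$ on $F(M)$ is the restriction of the corresponding operator on $M$; local nilpotency on $M$ therefore forces nilpotency on the finite-dimensional space $F(M)$. Conversely, for $V\in W^\lambda\text{-mod}$, I would show $G(V)=U(\mh_n)\otimes V\in\ch_{\b1}^\lambda$ by noting that any $z\in Z(U(\sl_{n+1}))$ lies in $Z(U_S)$ and therefore commutes with $U(\mh_n)\subset B$, so that
\[
(z-\chi_\lambda(z))^N\cdot(f(h)\otimes v)=f(h)\otimes(z-\chi_\lambda(z))^N v
\]
for all $N\geq 1$. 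Choosing $N$ large enough to annihilate $v$ (possible since $V$ is finite dimensional with generalized central character $\chi_\lambda$) gives local nilpotency on $G(V)$.

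Combining these two inclusions with the equivalences $FG=\text{id}_{W\text{-mod}}$ and $GF=\text{id}_{\ch_{\b1}}$ of Proposition \ref{equ-hw} yields the desired equivalence of $\ch_{\b1}^\lambda$ with $W^\lambda\text{-mod}$. I do not anticipate a genuine obstacle here; the only subtlety is the identification of the central character on both sides, which is handled once one notes that $Z(U(\sl_{n+1}))\to Z(U_S)\cong Z(W)$ intertwines the two notions of $\chi_\lambda$, a fact already encoded in Theorem \ref{main-iso-th}.
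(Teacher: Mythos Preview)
Your proposal is correct and follows exactly the route the paper intends: the paper's own argument is a one-line remark that the corollary follows from Proposition \ref{equ-hw} together with the identification $Z(U_S)\cong Z(W)$ stated just before it, and you have simply supplied the details of why the functors $F$ and $G$ restrict to the blocks with generalized central character $\chi_\lambda$. There is no substantive difference in approach.
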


\subsection{Blocks of $\ch_{\b1}$}

In this subsection, we will study equivalences between different blocks of $\ch_{\b1}$ using the translation functors.
We first recall the weighting functor introduced in \cite{N16}.
For a point $b\in\C^n$,  let $I_b=\langle h_1-b_1,\dots, h_n-b_n\rangle$ be the maximal ideal of  $U(\mh_n)=\C[h_1,\dots,h_n]$ corresponding to $b$.
For an $\sl_{n+1}$-module $M$ and  $b \in\C^n$, set $M^{b }:= M/I_{b}M$. For  $u=(u_1,\dots,u_n)\in \C^n$,
let  $$\mathfrak{W}^u(M):=\bigoplus_{b\in\Z^n}M^{b+u}.$$
The space $\mathfrak{W}^u(M)$ becomes  a weight
 module   under the following action:
\begin{equation}\label{3.3}
X_{\alpha} \cdot(v+I_{b}M):= X_{\alpha} v+I_{b+\alpha}M, v\in M, \alpha\in \Delta, b\in u+\Z^n,
\end{equation}
see Proposition 8 in \cite{N16}, where the root $\alpha$  is identified with
 $(\alpha(h_1),\dots,\alpha(h_n))$. We can see that $\text{supp}( \mathfrak{W}^u(M))\subset  u+\Z^n$.

\begin{lemma}\label{equ-lemm}Suppose that $\lambda=\sum_{i=0}^n \lambda_i\epsilon_i\in \mh_n^*$.
\begin{enumerate}
\item If  $\ch_{\b1}^{\lambda}$ is nonempty, then we can assume that
$\lambda_1-\lambda_2,\dots, \lambda_{n-1}-\lambda_n\in \Z_{\geq 0}$.
\item If $\lambda$ is not $\sl_{n+1}$-integral, then $\ch_{\b1}^{\lambda}$
is equivalent to $\ch_{\b1}^{c\gamma(\epsilon_0)}$ for some
$c\in \C$ with $c\not\in \Z$.
\item If $\lambda$ is $\sl_{n+1}$-integral and singular, then $\ch_{\b1}^{\lambda}$
is equivalent to $\ch_{\b1}^{-k\gamma(\epsilon_0)}$ for some
$k\in\{1,\dots,n\}$.
\item If $\lambda$ is $\sl_{n+1}$-integral and regular, then $\ch_{\b1}^{\lambda}$
is equivalent to $\ch_{\b1}^{0}$.
\end{enumerate}
\end{lemma}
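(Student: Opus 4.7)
The plan is to treat part (1) first as a normalization of the central-character representative, then derive parts (2)--(4) from the Bernstein--Gelfand translation functor applied inside $\ch_{\b1}$. The starting observation is that $\ch_{\b1}^\lambda=\ch_{\b1}^{\omega\cdot\lambda}$ for every $\omega\in S_{n+1}$, since these categories share the central character $\chi_\lambda$; thus part (1) is really the choice of a convenient dot-orbit representative of $\lambda$, and parts (2)--(4) are allowed to freely replace $\lambda$ by any such representative.

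For part (1), I would invoke the realizations of simple objects of $\ch_{\b1}$ via Shen--Larsson modules and simple finite-dimensional $W$-modules (Theorem \ref{pi-image} and Proposition \ref{non-int-sing}) to argue that non-emptiness of $\ch_{\b1}^\lambda$ forces at least $n$ of the coordinates of $\lambda$ (after, if necessary, applying a transposition $(0,i)$ in the Weyl group to move the ``odd'' coordinate to the $\epsilon_0$-slot) to lie in a common coset of $\C/\Z$. The reason is that the central character of any simple module of $\ch_{\b1}$ is computed from that of a simple finite-dimensional $W$-module through the Skryabin equivalence of Proposition \ref{equ-hw}, and matching against the Harish-Chandra description then constrains the cosets of the last $n$ coordinates. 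Once those cosets agree, the parabolic subgroup $S_n\subset S_{n+1}$ fixing $\epsilon_0$ permutes $(\lambda_1+\rho_1,\dots,\lambda_n+\rho_n)$ via the dot action, so the permutation sorting these values in decreasing order produces a representative with $\lambda_1-\lambda_2,\dots,\lambda_{n-1}-\lambda_n\in\Z_{\geq 0}$.

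For parts (2)--(4), the workhorse is the translation functor $T^{\mu,\lambda}:\mathcal{M}^\mu\to\mathcal{M}^\lambda$ recalled in Section 2.4. I would first verify that $T^{\mu,\lambda}$ restricts to an equivalence $\ch_{\b1}^\mu\to\ch_{\b1}^\lambda$: for $M\in\ch_{\b1}$ and $V$ finite-dimensional, the operator $e_{0i}-1=(e_{0i}-1)\otimes 1+1\otimes e_{0i}$ on $M\otimes V$ is a sum of two commuting locally nilpotent endomorphisms (the second because $e_{0i}$ acts nilpotently on $V$ as a root vector), so locally nilpotent; finite-dimensionality of $\mathrm{wh}_{\b1}$ on each central-character summand then follows via the equivalence $\ch_{\b1}\simeq W\text{-mod}$ of Proposition \ref{equ-hw}. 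With this in hand, in case (4) both $\lambda$ and $0$ are regular integral with trivial stabilizer, and after the normalization from (1) the weights $\lambda+\rho$ and $\rho$ lie in a common dominant chamber, so the BG hypotheses hold. In case (3), part (1) together with non-emptiness of $\ch_{\b1}^\lambda$ should force the stabilizer of $\lambda+\rho$ to be $\{e,(0,k)\}$ for some $k\in\{1,\dots,n\}$, matching that of $-k\gamma(\epsilon_0)+\rho$ and lying in the same Weyl chamber. In case (2), set $c=\lambda_0-\lambda_1$; by (1) we have $c\notin\Z$, and one checks that $c\gamma(\epsilon_0)+\rho$ is regular with the same (trivial) stabilizer as $\lambda+\rho$ and sits in the same Weyl chamber.

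The main obstacle, in my view, is part (1), together with the stabilizer restriction needed in case (3). Both hinge on extracting structural information from the classification of simple modules in $\ch_{\b1}$ to rule out configurations of $\lambda$ with incompatible coset data or over-large stabilizers; these constraints are not visible from Weyl-group combinatorics alone. Once they are in place, the translation-functor arguments for (2)--(4) are routine after the verification that $T^{\mu,\lambda}$ preserves the defining conditions of $\ch_{\b1}$.
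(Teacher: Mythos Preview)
Your approach to parts (2)--(4) via translation functors matches the paper's, and your argument that $T^{\mu,\lambda}$ preserves $\ch_{\b1}$ (writing $e_{0i}-1$ on $M\otimes V$ as a sum of commuting locally nilpotent operators) is a useful detail the paper leaves implicit.

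The gap is in part (1), and it propagates into the stabilizer claim you need for (3). You propose to invoke Theorem~\ref{pi-image} and Proposition~\ref{non-int-sing} to constrain the cosets of the $\lambda_i$, but this is circular: both of those results are proved \emph{after} Lemma~\ref{equ-lemm} in the paper, and their proofs explicitly rely on it. Specifically, the reduction to the principal block before Theorem~\ref{pi-image} uses part (4), and the reduction to $\lambda=\gamma(c\epsilon_0)$ before Proposition~\ref{non-int-sing} uses parts (2) and (3). So you cannot appeal to the classification of simple objects in $\ch_{\b1}$ at this stage.

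The paper's route around this is entirely different and avoids any simple-module classification. Since any $M\in\ch_{\b1}^\lambda$ is free of finite rank over $U(\mh_n)$ (Lemma~\ref{free}), one applies Nilsson's weighting functor $\mathfrak{W}^u$ to produce a nonzero uniformly bounded weight module in $\mathcal{B}^\lambda_u$. Mathieu's structural result on bounded modules (Proposition~8.5 in \cite{M}) then supplies a $\nu\in S_{n+1}\cdot\lambda$ with $\nu_1-\nu_2,\dots,\nu_{n-1}-\nu_n\in\Z_{\geq 0}$. This normalization also settles the stabilizer issue in (3) for free: the inequalities force $\lambda_1-1>\lambda_2-2>\cdots>\lambda_n-n$, so any coincidence among $\lambda_0,\lambda_1-1,\dots,\lambda_n-n$ must involve $\lambda_0$, giving $\lambda_0=\lambda_k-k$ for a unique $k$ and hence stabilizer $\{e,(0k)\}$ exactly as needed.
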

\begin{proof} (1)
By Lemma \ref{free}, any module $M$ in $\ch_{\b1}^{\lambda}$ is a free $U(\mh_n)$-module of finite rank. Choose a $u\in \C^n$ such that $\mathcal{B}^{\lambda}_{u}$ is not empty.
Then   $\mathfrak{W}^u(M)$ is a uniformly bounded
weight module, i.e., $\mathfrak{W}^u(M)\in\mathcal{B}^{\lambda}_{u}$.  By the structure of uniformly bounded
weight modules, there is a $\nu$ in the orbit $S_{n+1}\cdot \lambda$ such that $\nu_1-\nu_2,\dots, \nu_{n-1}-\nu_n\in \Z_{\geq 0}$, see Proposition 8.5 in \cite{M}.

(2) By (1), we can assume that $\lambda_1-\lambda_2,\dots, \lambda_{n-1}-\lambda_n\in \Z_{\geq 0}$.
If $\lambda$ is not $\sl_{n+1}$-integral, then
$-\lambda_1-\sum_{i=1}^n \lambda_i\not \in \Z$.   Set $c=-\lambda_1-\sum_{i=1}^n \lambda_i$. Then $\lambda-\gamma(c\epsilon_0)$
is $\sl_{n+1}$-dominant integral. By the result in \cite{BG},
 $$T_V^{\gamma(c\epsilon_0),\lambda}:\ch_{\b1}^{\gamma(c\epsilon_0)}\rightarrow \ch_\b1^{\lambda}$$ is an equivalence, where $V$ is the finite dimensional simple $\sl_{n+1}$-module with the highest weight $\lambda-\gamma(c\epsilon_0)$.

 (3) By (1), there is some $k\in\{1,\dots,n\}$ such that $-\sum_{i=1}^n \lambda_i=\lambda_k-k$.Then $\lambda-\gamma(-k\epsilon_0)$
is $\sl_{n+1}$-integral. Hence $\lambda-\gamma(-k\epsilon_0)$
is in the $S_{n+1}$-orbit of the highest weight of some finite-dimensional simple $\sl_{n+1}$-module $V$.
Consequently claim (3) follows from the similar proof in (2).

(4) If $\lambda$ is $\sl_{n+1}$-integral and regular, then
$-\sum_{i=1}^n\lambda_i>\lambda_1-1$ or
there is
$j\in\{1,\dots,n\}$ such that
$\lambda_j-j>-\sum_{i=1}^n\lambda_i> \lambda_{j+1}-j-1$.
Take $\nu=\lambda$ or  $(j\cdots 0)\cdot \lambda$.
Then $\nu$ is $\sl_{n+1}$-dominant integral.
Therefore $T_{V(\nu)}^{0, \nu}:  \ch_\b1^{0} \rightarrow \ch_{\b1}^\nu=\ch_{\b1}^{\lambda}$
 is an equivalence.
\end{proof}

\subsection{Explicit constructions of simple modules in $\ch_{\b1}$}
Let $A_{n}= \C [x_1, \dots, x_{n}]$ be the polynomial algebra in $n$ variables.
Then the subalgebra of $\text{End}_{\C}(A_{n})$  generated by $\{x_i,\frac{\partial}{\partial x_i}\mid 1\leq i\leq n\}  $ is called the Weyl algebra $\cd_{n}$ over $A_{n}$.
In this subsection,  we will construct and classify simple modules in $\ch_{\b1}$ from $\cd_n$-modules and $\gl_n$-modules. It is known that there is
a Lie algebra homomorphism from
$\sl_{n+1}$ to the Witt algebra $W^+_n=\text{Der}(A_n)$  such that
$$\aligned
h_k& \mapsto x_k\frac{\partial}{\partial x_k}, &
e_{ij}& \mapsto x_i\frac{\partial}{\partial x_j},\ i\neq j,\\
e_{0j}&\mapsto -\frac{\partial}{\partial x_j},&
e_{i0}&\mapsto
x_i\sum_{q=1}^nx_q\frac{\partial}{\partial x_q},
\endaligned$$ where $1\leq i, j, k \leq n$, see page 578 in \cite{M}.

Then from the constructions of Shen-Larsson modules over  $W^+_n$,  see \cite{Sh,LLZ}, we have the following algebra homomorphism.

\begin{proposition}\label{s-iso} For any  $n\in \Z_{\geq 2}$, the linear map $\psi$ satisfying
\begin{equation}\label{3.1}\aligned U(\sl_{n+1})& \rightarrow \cd_{n}\otimes U(\gl_n),\\
h_k& \mapsto x_k\frac{\partial}{\partial x_k}\otimes 1+1\otimes E_{kk},\\
e_{ij}& \mapsto 1\otimes E_{ij}+x_i\frac{\partial}{\partial x_j}\otimes 1,\\
e_{0j}&\mapsto -\frac{\partial}{\partial x_j}\otimes 1,\\
e_{i0}&\mapsto  \sum_{q=1}^nx_q\otimes E_{iq}+
x_i\sum_{q=1}^nx_q\frac{\partial}{\partial x_q}\otimes 1+ x_i\otimes I_n,
\endaligned \end{equation}  can define an associative algebra homomorphism, where $i,j,k \in \{1,\dots,n\}$ with $i\neq j$, and $I_n:=\sum_{i=1}^n E_{ii}$ is the identity matrix in $\gl_n$.
\end{proposition}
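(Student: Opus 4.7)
The plan is to verify that $\psi$ respects the Lie bracket relations of $\sl_{n+1}$; since $\cd_{n}\otimes U(\gl_n)$ is associative, the universal property of $U(\sl_{n+1})$ will then produce the desired associative algebra homomorphism. Concretely, for every ordered pair of generators $X,Y$ drawn from $\{h_k,\ e_{ij}\ (i\neq j),\ e_{0j},\ e_{i0}\}$ with indices in $\{1,\dots,n\}$, I would verify
$$[\psi(X),\psi(Y)]=\psi([X,Y])$$
inside $\cd_n\otimes U(\gl_n)$.

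Conceptually, the formulas in (\ref{3.1}) are the Shen--Larsson (AHW) realization applied to the Lie algebra embedding $\sl_{n+1}\hookrightarrow W_n^+$ recalled immediately above the proposition: given a $\gl_n$-module $V$, one builds a $W_n^+$-action on $A_n\otimes V$ via $f\partial_j\cdot(g\otimes v)=f\partial_j(g)\otimes v+\sum_k\partial_k(f)g\otimes E_{kj}v$, and pulling back along the embedding recovers exactly $\psi$, except for the extra summand $x_i\otimes I_n$ in $\psi(e_{i0})$. That scalar correction is forced by the mismatch between $\sl_{n+1}$, in which $h_0+h_1+\cdots+h_n=0$, and the target which carries a nonzero $I_n=\sum_{k=1}^n E_{kk}$; the correction is exactly what is needed so that $[\psi(e_{0i}),\psi(e_{i0})]$ closes on $\psi(h_0-h_i)$.

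I would organize the verifications in order of increasing difficulty: (i) $[\psi(h_i),\psi(h_j)]=0$, immediate; (ii) $[\psi(h_i),\psi(e_{pq})]$ for any $p,q$, which follows because the weight picked up from $x_p\partial_q$ and from $E_{pq}$ sum to the $h_i$-weight of $e_{pq}$; (iii) $[\psi(e_{0i}),\psi(e_{0j})]=0$ from $[\partial_i,\partial_j]=0$; (iv) brackets among $\psi(e_{ij})$ and $\psi(e_{0k})$ for indices in $\{1,\dots,n\}$, which reduce to the Weyl-algebra identity $[\partial_j,x_i\partial_l]=\delta_{ij}\partial_l$ together with the standard $\gl_n$ relation $[E_{ij},E_{kl}]=\delta_{jk}E_{il}-\delta_{il}E_{kj}$.

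The main obstacle is (v): the brackets $[\psi(e_{i0}),\psi(e_{0j})]$, $[\psi(e_{i0}),\psi(e_{j0})]$, and $[\psi(e_{i0}),\psi(e_{jk})]$, where $\psi(e_{i0})$ has three summands and each bracket produces a handful of cross terms. The essential inputs are the Weyl-algebra relations $[\partial_i,x_j]=\delta_{ij}$ and $[x_i\partial_q,x_j\partial_k]=\delta_{qj}x_i\partial_k-\delta_{ik}x_j\partial_q$, the $\gl_n$ bracket, and the $\sl_{n+1}$ identity $h_0=-\sum_{k=1}^n h_k$. As the prototype calculation, one verifies
$$[\psi(e_{0i}),\psi(e_{i0})]=-\bigl(x_i\partial_i+\sum_q x_q\partial_q\bigr)\otimes 1-1\otimes E_{ii}-1\otimes I_n=\psi(h_0-h_i),$$
in which the role of the auxiliary $x_i\otimes I_n$ summand becomes transparent. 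The remaining cases involving $e_{i0}$ are analogous but require careful bookkeeping of the three-way cross terms; this bookkeeping is essentially the only non-routine work in the proof.
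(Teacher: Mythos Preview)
Your proposal is correct and is essentially the paper's own approach: the paper does not give an independent proof but simply invokes the Shen--Larsson construction over $W_n^+$ (citing \cite{Sh,LLZ}) together with the embedding $\sl_{n+1}\hookrightarrow W_n^+$ displayed just before the proposition, which is exactly the conceptual origin you identify. Your plan to check the bracket relations directly is precisely what underlies those references, and your prototype computation $[\psi(e_{0i}),\psi(e_{i0})]=\psi(h_0-h_i)$ correctly isolates the role of the extra $x_i\otimes I_n$ term.
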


For a finite dimensional simple $\gl_n$-module $V$ and a simple module $P$ over $D_n$,  by $T(P, V ) $ we denote the space $P \otimes  V$
considered as a module over $ \sl_{n+1}$ through the homomorphism $\psi$.

For convenience, denote $d_i= x_i\frac{\partial}{\partial x_i}$. Let $\Omega:=D_n/\sum_{i=1}^nD_n(\frac{\partial}{\partial x_i}+1)$ which is a simple $D_n$-module. As a vector space,
$\Omega=\C[d_1,\dots,d_n]$.  The $D_n$-module structure
on $\Omega$ is  defined by
$$\aligned \frac{\partial}{\partial x_i}\cdot f(d)& =-f(d+e_i),\\
x_i\cdot f(d)& =-f(d-e_i)d_i,
\endaligned $$ where $f(d)=f(d_1,\dots,d_n)$, $f(d+e_i)=f(d_1,\dots,d_i+1,\dots, d_n)$, $i=1,\dots, n$.
It is easy to see that $T(\Omega, V ) \in \ch_{\b1}$ and
$\mathrm{wh}_{\b1}(T(\Omega, V ))=V$.

We identify  $\lambda=(\lambda_1,\dots, \lambda_n)\in \C^n$
with the linear map $\lambda$ on $(\oplus_{i=1}^n \C E_{ii})$
such that $\lambda(E_{ii})=\lambda_i$ for any $i$. For   $\lambda\in \C^n$,
let $V(\lambda)$ be the highest weight $\gl_n$-module
with the highest weight $\lambda$. It is known that $V(\lambda)$ is finite dimensional if and only if
$\lambda\in \Lambda_n^{+}:=\{\lambda\in \C^n\mid
\lambda_1-\lambda_2,\dots,\lambda_{n-1}-\lambda_n\in \Z_{\geq 0}\}$.

\begin{lemma}\label{tensor-mod}\begin{enumerate}[$($a$)$]
 \item For  $\lambda\in  \Lambda_n^{+}$,  $ T(\Omega,  V(\lambda) ) \in \ch_{\b1}^{\lambda}$, where $\lambda$ is identified with
    the weight $-(\sum_{i=1}^n\lambda_i)\epsilon_0+\sum_{i=1}^n\lambda_i\epsilon_i$ in $\mh_n^*$.
 \item  If  $\lambda$ is not $\sl_{n+1}$-regular integral,  then $T(\Omega,  V(\lambda))$ is a
    simple  $\sl_{n+1}$-module.
\end{enumerate}
\end{lemma}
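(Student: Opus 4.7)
The plan is to prove (a) in two stages — membership in $\ch_{\b1}$ and identification of the generalized central character — and then deduce (b) via Proposition \ref{equ-hw}.

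For membership in $\ch_{\b1}$: from (\ref{3.1}), $e_{0j}$ acts on $f(d)\otimes v\in\Omega\otimes V(\lambda)$ as the shift $f(d+e_j)\otimes v$, so $e_{0j}-1$ strictly lowers the total $d$-degree of any monomial and is locally nilpotent. The joint fixed subspace of all these shifts is $\C\otimes V(\lambda)$, which is finite-dimensional since $\lambda\in\Lambda_n^+$; this also identifies $\mathrm{wh}_{\b1}(T(\Omega,V(\lambda)))$ with $V(\lambda)$.

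For the central character: Proposition \ref{equ-hw} together with the identification $Z(U(\sl_{n+1}))\subseteq Z(U_S)\cong Z(W)$ from Theorem \ref{main-iso-th} reduces the problem to showing that the $W$-module $V(\lambda)$ has generalized central character $\chi_\lambda$. Using (\ref{x-w-def}), the equality $e_{0i}(1\otimes v)=1\otimes v$ on Whittaker vectors, and (\ref{3.1}), I compute
\[
 x_{ij}\cdot(1\otimes v)=(e_{ij}-h_i)(1\otimes v)=1\otimes(E_{ij}-E_{ii})v
\]
for $i\neq j$, together with an analogous (longer) explicit formula for $\omega_k$. Applying a central element $z\in Z(U(\sl_{n+1}))$ to the $\gl_n$-highest weight vector $v_\lambda$, I then use $E_{ij}v_\lambda=0$ for $i<j$ and $E_{ii}v_\lambda=\lambda_i v_\lambda$ together with the Harish-Chandra description of $Z(U(\sl_{n+1}))$ to conclude $z\cdot(1\otimes v_\lambda)=\chi_\lambda(z)(1\otimes v_\lambda)$ modulo strictly lower $\gl_n$-weight terms. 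A filtration argument along the $\gl_n$-weight decomposition of $V(\lambda)$ then shows that $z-\chi_\lambda(z)$ is nilpotent on $V(\lambda)$, completing (a).

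For (b), Proposition \ref{equ-hw} reduces simplicity of $T(\Omega,V(\lambda))$ to simplicity of the $W$-module $V(\lambda)$. The action $x_{ij}\cdot v=(E_{ij}-E_{ii})v$ together with iterated commutators recovers the off-diagonal $\gl_n$-action up to diagonal scalar shifts; combined with the $\omega_k$-action, the only possible proper $W$-submodules would arise when these scalar shifts coincide across different $\gl_n$-weight spaces of $V(\lambda)$, and such coincidences are precisely the regular-integral Weyl-orbit singularities ruled out by the hypothesis on $\lambda$. Hence $V(\lambda)$ is $W$-simple, so $T(\Omega,V(\lambda))$ is a simple $\sl_{n+1}$-module. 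The most delicate step will be the central-character identification in (a): the explicit form of $\omega_k$ is involved, and reconciling it with the Harish-Chandra image via the $\gl_n$-weight filtration requires careful bookkeeping.
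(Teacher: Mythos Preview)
Your verification that $T(\Omega,V(\lambda))\in\ch_{\b1}$ is fine.

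For the central character in (a), your approach has a real gap. You propose to apply $z\in Z(U(\sl_{n+1}))$ to $1\otimes v_\lambda$ and then invoke the Harish-Chandra description to identify the resulting scalar as $\chi_\lambda(z)$. But $1\otimes v_\lambda\in T(\Omega,V(\lambda))$ is \emph{not} an $\sl_{n+1}$-highest weight vector: from (\ref{3.1}) one has $e_{0j}\cdot(1\otimes v_\lambda)=1\otimes v_\lambda\neq 0$. The Harish-Chandra projection computes $\chi_\lambda(z)$ by acting on a genuine $\sl_{n+1}$-highest weight vector of weight $\lambda$, which you do not have here; nor do you have an expression of $z$ as a polynomial in $x_{ij},\omega_k$, so there is nothing to feed into your filtration argument. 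The paper bypasses this entirely by comparing with $T(A_n,V(\lambda))$: in that module $1\otimes v_\lambda$ \emph{is} an $\sl_{n+1}$-highest weight vector of weight $\lambda$ (now $e_{0j}\cdot 1=-\partial_j 1=0$), so $z$ acts by $\chi_\lambda(z)$ there. Since both modules arise by pulling back along the same homomorphism $\psi$ of Proposition~\ref{s-iso} with the same $\gl_n$-factor $V(\lambda)$, and $A_n$ is a faithful $D_n$-module, one gets $\psi(z)-\chi_\lambda(z)\in D_n\otimes\mathrm{Ann}_{U(\gl_n)}V(\lambda)$, hence $z$ acts by $\chi_\lambda(z)$ on $T(\Omega,V(\lambda))$ as well. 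This two-line comparison replaces your whole filtration scheme. (Incidentally, the formulas you derive for $x_{ij}$ and $\omega_k$ on Whittaker vectors are exactly Lemma~\ref{w-action}, proved later in the paper.)

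For (b), the paper simply cites Theorem~4.7 of \cite{GN}. Your alternative route via Proposition~\ref{equ-hw}, reducing to $W$-simplicity of $V(\lambda)$, is a legitimate strategy and is close in spirit to what the paper does for the principal block in Theorem~\ref{pi-image}. But your key assertion---that proper $W$-submodules of $V(\lambda)$ arise precisely when certain diagonal shifts coincide, and that this happens exactly when $\lambda$ is $\sl_{n+1}$-regular integral---is stated without proof and is the entire content of the claim. Matching reducibility of the $W$-module $V(\lambda)$ against the $\sl_{n+1}$-integrality and regularity conditions on $\lambda$ requires a concrete analysis using both the $x_{ij}$-action and the quadratic $\omega_k$-action; ``iterated commutators recover the off-diagonal $\gl_n$-action up to diagonal shifts'' is not enough, since it is exactly the interaction of those shifts with the $\omega_k$ that decides the matter.
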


\begin{proof}(a) We can see that $T(A_n, V(\lambda) ) $ is
the highest weight $\sl_{n+1}$-module with the highest weight
$\lambda$. So $T(A_n, V(\lambda)) \in \mathcal{M}^{\lambda}$.
Since both $T(A_n, V(\lambda))$ and $ T(\Omega, V )$ are
constructed through the same map
$\psi$ defined by (\ref{3.1}), they have the same generalized
central character.

(b) This claim follows  from  Theorem 4.7 in \cite{GN}.
\end{proof}

For
$\sl_{n+1}$-regular integral weights, by (4) in Lemma \ref{equ-lemm},  we only need to consider the principal block $\ch_{\b1}^{0}$.
We use $\delta_i\in(\oplus_{i=1}^n \C E_{ii})^*$, $i=1,\dots,n$, to denote the $i$-th fundamental weight, i.e., $\delta_i(E_{11})=\cdots= \delta_i(E_{ii})=1$ and $\delta_i(E_{jj})=0$ for $j>i$.
 It is
well-known that  $V(\delta_1)$  is isomorphic to the
natural representation of $\gl_n$ on $\mathbb{C}^n$ via the matrix
product. The exterior
power $\bigwedge^k(\mathbb{C}^n)=\mathbb{C}^n\wedge\cdots\wedge
\mathbb{C}^n\ \ (k\ \mbox{times})$ is a $\mathfrak{gl}_n$-module
with the action given by $$X(v_1\wedge\cdots\wedge
v_k)=\sum\limits_{i=1}^k v_1\wedge\cdots\wedge v_{i-1}\wedge
Xv_i\cdots\wedge v_k, \,\,\forall \,\, X\in \gl_n.$$ Moreover,
\begin{equation}\label{dk}{\bigwedge}^k(\mathbb{C}^n)\cong V(\delta_k),\,\forall\,\, 1\leq k\leq n,\end{equation}
 as
$\gl_n$-modules, see Exercise 21.11 in \cite{H}).
For convenience, let $V(\delta_0)=\bigwedge^0(\mathbb{C}^n)=\C$ be the 1-dimensional trivial $\gl_n$-module and  set
$v\wedge a=av$ for any $v\in\C^n, a\in\C$. One can see that
 $T(\Omega,\bigwedge^i(\mathbb{C}^n))\in \ch_{\b1}^{0}$ for any $i\in\{0,\dots,n\}$.

By Lemma 3.2 in \cite{LLZ}, for each $k\in \{0,1,\dots, n-1\}$, any $D_n$-module $P$, there is an $\sl_{n+1}$-module homomorphism
$$\aligned \pi_{k}: T(P,\bigwedge^k(\mathbb{C}^n))& \rightarrow T(P,\bigwedge^{k+1}(\mathbb{C}^n)), \\
f\otimes v&\mapsto \sum_{i=1}^n (\frac{\partial}{\partial x_i}\cdot f)\otimes (e_i \wedge v), \endaligned$$
such that $\pi_{k+1}\pi_{k}=0$, where $\{e_1,\dots, e_n\}$ is the standard basis of $\mathbb{C}^n$, $v\in\bigwedge^k(\mathbb{C}^n), f\in P$. For every $0\leq k\leq n-1$, denote   $\text{im}\pi_{k}$ by $L_{k+1}(P)$.

Since $e_{0i}-1$ acts locally nilpotently on $T(\Omega,V)$, $e_{0i}$ acts bijectively on $T(\Omega,V)$ for all $i\in\{1,\dots,n\}$.
So $T(\Omega,V)$ can be extended to a $U_S$-module.
Recall that $U_S=B\otimes W$. Restricted to $W$, $T(\Omega,V)$ has a $W$-module structure.
From $[e_{0i}, W]=0$, $W\text{wh}_{\b1}T(\Omega,V)\subset \text{wh}_{\b1}T(\Omega,V)$. So
$\text{wh}_{\b1}T(\Omega,V)=V$ is a $W$-module.

The following lemma
gives the explicit action of $W$ on a $\gl_n$-module $V$,  which is useful for the classification
of finite dimensional simple $W$-modules in terms of $\gl_n$-modules.
In $T(\Omega,V)$, we identify $1\otimes V$ with $V$.

\begin{lemma}\label{w-action} Let $V$ be a $\gl_n$-module.
 The action of $W$ on $\emph{wh}_{\b1}T(\Omega,V)=V$ is described as follows:
\begin{equation}\label{x-w-act}\aligned x_{ij}v &=  (E_{ij}-E_{ii})v,\ 1\leq i\neq j\leq n,\\
 \omega_k v
&=\sum_{j=1}^n (E_{kj}E_{jj}-E_{kj})v,\ 1\leq k\leq n,
\endaligned \end{equation}where $ v\in V$.
\end{lemma}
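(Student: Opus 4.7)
The plan is to carry out a direct computation using the explicit generators of $W$ from~\eqref{x-w-def} together with the homomorphism $\psi$ from~\eqref{3.1}. First I would record that the Whittaker vectors in $T(\Omega,V)$ are exactly $1\otimes V$: since $\psi(e_{0j})=-\partial_j\otimes 1$ and in $\Omega=D_n/\sum_i D_n(\partial_i+1)$ one has $\partial_j\cdot 1=-1$, every element $1\otimes v$ satisfies $e_{0j}(1\otimes v)=1\otimes v$. Consequently $e_{0j}^{-1}$ also acts as the identity on $1\otimes v$. I would also note that $\psi(x_i\partial_i)$ acts on $\Omega$ as multiplication by $d_i$, which streamlines all subsequent bookkeeping.

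For $x_{ij}=e_{ij}e_{0i}e_{0j}^{-1}-h_i$ with $i\neq j$, the action on $1\otimes v$ reduces immediately to $e_{ij}(1\otimes v)-h_i(1\otimes v)$. Using $\psi(e_{ij})=1\otimes E_{ij}+x_i\partial_j\otimes 1$ and $\psi(h_i)=x_i\partial_i\otimes 1+1\otimes E_{ii}$, one quickly gets $e_{ij}(1\otimes v)=1\otimes E_{ij}v+d_i\otimes v$ and $h_i(1\otimes v)=d_i\otimes v+1\otimes E_{ii}v$, so the Weyl-algebra contributions cancel and the desired formula $x_{ij}v=(E_{ij}-E_{ii})v$ falls out.

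The formula for $\omega_k$ is the main obstacle: one must apply a lengthy operator to $1\otimes v$ and carefully collect terms. The structural shortcut is that $\omega_k\in W$, so by Lemma~\ref{comm} it preserves $\mathrm{wh}_{\b1}T(\Omega,V)=1\otimes V$; hence all $d$-dependent contributions in $\omega_k(1\otimes v)$ must cancel, and only the constant-in-$d$ part carries information. I would expand $\omega_k(1\otimes v)=e_{k0}e_{0k}(1\otimes v)+h_k(h_k-1)(1\otimes v)+\sum_{j\neq k}e_{kj}(h_j-1)(1\otimes v)$, using $e_{0k}e_{0j}^{-1}(1\otimes v)=1\otimes v$. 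Here $e_{k0}(1\otimes v)$ produces only $d$-linear and $d$-quadratic terms (none constant in $d$), while for $j\neq k$ the operator $(h_j-1)$ splits $1\otimes v$ into $d_j\otimes v+1\otimes(E_{jj}-1)v$, and applying $e_{kj}$ yields both a $d$-part and the constant part $1\otimes E_{kj}(E_{jj}-1)v$; the $j=k$ case contributes $1\otimes E_{kk}(E_{kk}-1)v$ analogously.

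Summing the constant-in-$d$ parts gives $1\otimes\sum_{j=1}^nE_{kj}(E_{jj}-1)v=1\otimes\sum_j(E_{kj}E_{jj}-E_{kj})v$, which is the claimed formula. As a sanity check (and to make the argument self-contained) I would verify that the $d$-linear and $d$-quadratic contributions from the three groups above indeed cancel: the quadratic terms collapse via $-d_k\sum_q d_q+d_k^2+\sum_{j\neq k}d_jd_k=0$, and the linear terms cancel after combining $-\sum_q d_q\otimes E_{kq}v+\sum_{j\neq k}d_j\otimes E_{kj}v=-d_k\otimes E_{kk}v$ with the $d_k$-coefficients coming from $-d_k\otimes I_n v$, $d_k\otimes(2E_{kk}-1)v$, and $\sum_{j\neq k}d_k\otimes(E_{jj}-1)v$, all of which telescope through $\sum_j E_{jj}=I_n$. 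The bookkeeping is the only obstacle; the structural fact that $\omega_k\in W$ guarantees a priori that the cancellation must work.
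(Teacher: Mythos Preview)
Your approach is correct and is essentially the paper's own: both compute $x_{ij}(1\otimes v)$ and $\omega_k(1\otimes v)$ directly through the homomorphism $\psi$ and watch the $d$-terms cancel. Your one organisational improvement is the observation that, since $\omega_k\in W$ commutes with every $e_{0i}$, the image $\omega_k(1\otimes v)$ must already lie in $\mathrm{wh}_{\mathbf 1}T(\Omega,V)=1\otimes V$, so only the constant-in-$d$ part needs to be computed; the paper instead carries all terms through and cancels them by hand. Your constant-term calculation is correct and gives the stated formula.

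One small slip in your optional sanity check: in the linear-in-$d$ tally you have omitted two sources of $d_k\otimes v$. The term $e_{k0}(1\otimes v)$ contributes $+\,d_k\otimes v$ (from $x_k\cdot\sum_q d_q=-d_k\sum_q d_q+d_k$), and each $e_{kj}(d_j\otimes v)$ with $j\neq k$ contributes $+\,d_k\otimes v$ (from $x_k\partial_j\cdot d_j=(d_j+1)d_k$). Without these the listed $d_k$-coefficients sum to $-n$ rather than $0$; with them the telescoping closes. This does not affect the validity of your argument, since the structural fact already forces the cancellation.
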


\begin{proof} From (\ref{3.1}), the action of $e_{kj}$ on $T(\Omega,V(\delta_k))$ is defined by
$$e_{kj}(f(d)\otimes v)=f(d+e_j-e_k)d_k\otimes v+f(d)\otimes E_{kj}v, $$
where $f(d)\in \Omega, v\in V(\delta_k)$.

From
 $$\aligned & \sum_{j=1}^ne_{kj}(d_j\otimes v+1\otimes E_{jj}v-1\otimes v)\\
 &= \sum_{j=1}^n (d_j-\delta_{jk})d_k\otimes v+ \sum_{j=1}^n (d_j-1)\otimes E_{kj}v\\
 &\ \ \ \ + \sum_{j=1}^n d_k\otimes E_{jj}v+\sum_{j=1}^n 1\otimes E_{kj}E_{jj}v,
  \endaligned$$
 we can compute that
 $$\aligned \omega_k(1\otimes v)
&=e_{k0}e_{0k}+\sum_{j=1}^ne_{kj}(h_j-1)e_{0k}e_{0j}^{-1}(1\otimes v)\\
&= ( -\sum_{q=1}^n d_q\otimes E_{kq}v-
d_k\sum_{q=1}^n(d_q-\delta_{qk})\otimes v- d_k\otimes I_n v)\\
& \ \ \ \
+ \sum_{j=1}^n(d_j-\delta_{kj})d_k\otimes v+\sum_{j=1}^n(d_j-1)\otimes E_{kj}v\\
&\ \ \ \ +\sum_{j=1}^n d_k\otimes E_{jj} v+ \sum_{j=1}^n 1\otimes E_{kj}E_{jj}v\\
&= \sum_{j=1}^n 1\otimes (E_{kj}E_{jj}-E_{kj})v,\  1\leq k\leq n.
\endaligned $$

Moreover, for any $i,j: 1\leq i\neq  j \leq n$,
 $$\aligned x_{ij}(1\otimes v)& = (e_{ij}e_{0i}e_{0j}^{-1}-h_i)(1\otimes v)\\
&= 1\otimes E_{ij}v+d_i\otimes v-(1\otimes E_{ii}v+d_i\otimes v)\\
&= 1\otimes (E_{ij}-E_{ii})v.\endaligned $$

This completes the proof.
\end{proof}

In the following  Theorem, we classify all simple objects in $\ch_{\b1}^{0}$.

\begin{theorem}\label{pi-image}
In the principal block
$\ch_{\b1}^{0}$, we have the following results.
\begin{enumerate}
\item  $T(\Omega,V(\delta_0))$ and $T(\Omega,V(\delta_n))$ are simple $\sl_{n+1}$-modules.
\item  For each $k\in \{0,1,\dots, n-1\}$, $L_{k+1}(\Omega)=\text{im}\pi_{k}=\text{ker} \pi_{ k+1}$ is a simple $\sl_{n+1}$-module.
    \item If $k\neq l\in \{0,1,\dots, n-1\}$, then $L_{k+1}(\Omega)\not\cong L_{l+1}(\Omega)$ .
\item $\emph{wh}_{\b1}(L_{1}(\Omega)),\cdots, \emph{wh}_{\b1}(L_{n}(\Omega))$ are all simple objects in the principal block $W^0\text{-mod}$ of
            $W\text{-mod}$, where the action of $W$ on
        $\emph{wh}_{\b1}(L_{i}(\Omega))$  is given by (\ref{x-w-act}).
\item The block
$\ch_{\b1}^{0}$ has $n$ simple objects: $L_{1}(\Omega),\cdots, L_{n}(\Omega)$.
\end{enumerate}
\end{theorem}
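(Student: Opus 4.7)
My plan is to transport the whole theorem across the equivalence $F=\mathrm{wh}_{\b1}:\ch_{\b1}\xrightarrow{\sim}W\text{-mod}$ of Proposition~\ref{equ-hw}, so that everything reduces to finite-dimensional linear algebra on the exterior powers $\bigwedge^\bullet(\C^n)$ equipped with the twisted $W$-action of Lemma~\ref{w-action}. The one calculation that unlocks everything is this: applied to a Whittaker vector $1\otimes v\in\mathrm{wh}_{\b1}(T(\Omega,V(\delta_k)))=V(\delta_k)$, and using $\frac{\partial}{\partial x_i}\cdot 1=-1$ in $\Omega$, the defining formula for $\pi_k$ gives
$$F(\pi_k)(v)\;=\;-\sum_{i=1}^n e_i\wedge v\;=\;-x\wedge v,\qquad x:=\sum_{i=1}^n e_i\neq 0.$$
So $F$ carries the complex $T(\Omega,V(\delta_\bullet))$ to the classical Koszul complex $(\bigwedge^\bullet(\C^n),\wedge x)$ up to sign, but re-endowed with the $W$-action \eqref{x-w-act} in place of the standard $\gl_n$-action.

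With this identification in place, the equality $\mathrm{im}\,\pi_k=\ker\pi_{k+1}$ of part~(2) and the endpoint parts of (1) drop out from the acyclicity of the Koszul complex on a non-zero vector: choosing a basis $\{x,e'_2,\dots,e'_n\}$ of $\C^n$, both sides equal $x\wedge\bigwedge^k(\mathrm{span}(e'_2,\dots,e'_n))$, and exactness of $F$ lifts the identity to $\ch_{\b1}$. Injectivity of $\pi_0$ and surjectivity of $\pi_{n-1}$ at the Koszul level yield $T(\Omega,V(\delta_0))\cong L_1(\Omega)$ and $T(\Omega,V(\delta_n))=L_n(\Omega)$; both of these have $1$-dimensional Whittaker spaces and are therefore automatically simple as $W$-modules, completing (1).

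The technical heart is simplicity of $L_{k+1}(\Omega)$ for $1\le k\le n-1$. Under $F$ this reads: the subspace $x\wedge\bigwedge^k(V')$ of $\bigwedge^{k+1}(\C^n)$, with $V':=\mathrm{span}(e'_2,\dots,e'_n)$, is a simple $W$-module under \eqref{x-w-act}. I would argue by taking an arbitrary non-zero element $x\wedge w$ and realising, inside the cyclic $W$-submodule it generates, every basis monomial $x\wedge e'_{i_1}\wedge\cdots\wedge e'_{i_k}$. Commutator identities such as $[x_{ij},x_{jl}]=x_{il}-x_{ij}$, derived directly from \eqref{x-w-act}, generate enough $\sl_n$-transitivity to permute monomial basis vectors of $\bigwedge^k(V')$, while the twisting terms $-E_{ii}$ in $x_{ij}$ only produce scalar corrections on each $\mh_n$-weight component of the ambient $\bigwedge^{k+1}(\C^n)$ and so do not obstruct transitivity. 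Part~(4) is then just the restatement of (2) through the equivalence $F$.

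Finally, for pairwise non-isomorphism in (3) and completeness in (5), I would apply the weighting functor $\mathfrak{W}^u$ of \cite{N16} for generic $u\in\C^n$ with $u_1,\dots,u_n,-|u|\notin\Z$: by Lemma~\ref{free}, $\mathfrak{W}^u$ sends each $L_{r}(\Omega)$ to a non-zero uniformly bounded weight module in $\mathcal{B}^0_u$ with composition factors in $\mathcal{C}^0_u$, and Theorem~\ref{GS-th}(b) classifies the latter block as having exactly $n$ pairwise non-isomorphic simples parameterised by the vertices of a quiver with $n$ nodes. Matching these simples with the images of our $L_1(\Omega),\dots,L_n(\Omega)$ under $\mathfrak{W}^u$ settles both (3) and (5) at once. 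The chief obstacle I foresee is the explicit simplicity verification in (2): the twisting of the $\gl_n$-action by the $-E_{ii}$-terms in \eqref{x-w-act} must be handled carefully in order to rule out accidental $W$-invariant subspaces of $x\wedge\bigwedge^k(V')$; once this combinatorial step is in hand, everything else follows from routine bookkeeping and the machinery already developed in Sections~2 and~3.
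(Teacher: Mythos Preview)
Your reduction to the Whittaker-vector side via $F$ and the identification of $F(\pi_k)$ with $-x\wedge(\cdot)$ are correct, and your Koszul-complex argument for $\mathrm{im}\,\pi_k=\ker\pi_{k+1}$ and for the endpoints in (1) is clean --- in fact slightly slicker than the paper, which quotes \cite{LLZ} for $\sl_{n+1}(\ker\pi_{k+1}/\mathrm{im}\,\pi_k)=0$ and then argues that no nonzero object of $\ch_{\b1}$ can carry a trivial $\sl_{n+1}$-action.

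For the simplicity of $\mathrm{wh}_{\b1}(L_{k+1}(\Omega))=x\wedge\bigwedge^k(V')$ the paper bypasses your flagged ``chief obstacle'' with a single structural observation: the elements $\tilde E_{ij}:=E_{ij}-E_{in}$ for $1\le i,j\le n-1$ span a Lie subalgebra $\mathfrak{a}_{n-1}\cong\gl_{n-1}$ lying inside the image of $W$ under \eqref{x-w-act} (indeed $\tilde E_{ij}=x_{ij}-x_{in}$ for $i\neq j$ and $\tilde E_{ii}=-x_{in}$), and one checks directly that $\mathfrak{a}_{n-1}$ acts on $x\wedge\bigwedge^k(V')\cong\bigwedge^k(\C^{n-1})$ by the standard $\gl_{n-1}$-action. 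Simplicity is then the classical fact that $\bigwedge^k(\C^{n-1})$ is a simple $\gl_{n-1}$-module, and (3) follows immediately since $\bigwedge^k(\C^{n-1})\not\cong\bigwedge^l(\C^{n-1})$ as $\gl_{n-1}$-modules for $k\neq l$. This dissolves the twisting issue you raise: the $-E_{ii}$ terms are absorbed into the change of generators.

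Your route to (4)--(5) via the weighting functor is where there is a genuine gap. First, (4) asserts that the $\mathrm{wh}_{\b1}(L_i(\Omega))$ are \emph{all} the simples in $W^0\text{-mod}$, not merely that they are simple; your sentence ``Part~(4) is then just the restatement of (2)'' misses this. Second, $\mathfrak{W}^u$ is not known at this point to be an equivalence, nor to reflect isomorphism classes of simples: you have not argued that $\mathfrak{W}^u$ sends an arbitrary simple $M\in\ch_{\b1}^0$ to a \emph{simple} cuspidal module, nor that the assignment is injective on isomorphism classes, so ``matching'' with the $n$ simples of $\mathcal{C}^0_u$ is not yet justified. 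Note also that the paper's Theorem~\ref{equ-cw}, which does establish $W^0\text{-mod}\simeq\mathcal{C}^0_\mu$, \emph{uses} part~(4) of the present theorem in its proof, so appealing to that equivalence would be circular. The paper instead settles (4) by an external count: Premet's bijection \cite{Pr2} between isoclasses of simples in $W^0\text{-mod}$ and primitive ideals $I\subset U(\sl_{n+1})$ with $I\cap Z=\ker\chi_0$ and $\mathrm{Var}(I)=\overline{\mathcal O_e}$, combined with Petukhov's result \cite{Pe} that there are exactly $n$ such ideals. Part~(5) then follows from (4) and the equivalence $F$. If you wish to avoid the primitive-ideal machinery, you must supply the missing reflection properties of $\mathfrak{W}^u$ directly.
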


\begin{proof} By Proposition \ref{equ-hw}, to show the simplicity of a module $M\in
\ch_{\b1}^{0}$, it remains to prove that $\text{wh}_{\b1}(M)$
is a simple $W$-module.

(1) Since $\dim\text{wh}_{\b1} T(\Omega,V(\delta_0))=\dim\text{wh}_{\b1} T(\Omega,V(\delta_n))=1$, the $W$-modules  $\text{wh}_{\b1} T(\Omega,V(\delta_0)), \text{wh}_{\b1} T(\Omega,V(\delta_n))$ are simple.

(2) By Lemma 3.4 in \cite{LLZ}, $\sl_{n+1}(\text{ker} \pi_{k+1}/\text{im}\pi_k)=0$. If $\text{ker} \pi_{k+1}/\text{im}\pi_k\neq 0$, then by the definition of $\ch_{\b1}$, there is a nonzero $v\in \text{ker} \pi_{k+1}/\text{im}\pi_k$ such that $e_{0i}v=v\neq 0$ for all $i$, a contradiction.
 So $\text{ker} \pi_{k+1}=\text{im}\pi_k$.

By the definition of $\pi_k$, we can see that $$\text{wh}_{\b1}(\text{im}\pi_k)=\text{Span}\{ (\sum_{i=1}^n e_i)\wedge v \mid v\in  V(\delta_k)\}. $$

Let $$V_{k}=\text{Span}\{  (\sum_{i=1}^n e_i)\wedge e_{j_1}\wedge\dots \wedge e_{j_{k-1}}\wedge e_{j_k}\mid 1\leq j_1,\dots, j_k\leq n-1\}, $$ which is a subspace of $\text{wh}_{\b1}(\text{im}\pi_k)$.
From
\begin{equation}\label{e-in}(\sum_{i=1}^n e_i)\wedge e_{j_1}\wedge\dots \wedge e_{j_{k-1}}\wedge (\sum_{i=1}^n e_i)=0,\end{equation}
 we have  $ (\sum_{i=1}^n e_i)\wedge e_{j_1}\wedge\dots \wedge e_{j_{k-1}}\wedge e_n\in V_{k}$.
Hence as a vector space, $$\text{wh}_{\b1}(\text{im}\pi_k)=V_{k}\cong \bigwedge^k (\C^{n-1}).$$

Let $\mathfrak{a}_{n-1}=\text{Span}\{E_{ij}-E_{ii}\mid 1\leq i\leq n-1, 1\leq j\leq n\}$. The map $$\mathfrak{a}_{n-1}\rightarrow \gl_{n-1},\ \ \tilde{E}_{ij}: = E_{ij}-E_{in}\mapsto E_{ij},\ \forall\ 1\leq i,j\leq n-1,$$ is a Lie algebra isomorphism. Let
$$\tilde{e}_{j_1}\wedge\dots \wedge \tilde{e}_{j_k} :=(\sum_{i=1}^n e_i)\wedge e_{j_1}\wedge\dots \wedge e_{j_k}, 1\leq j_1,\dots, j_k\leq n-1.$$
Then we can check that
\begin{equation}\label{a-mod}\aligned &\tilde{E}_{lq}(\tilde{e}_{j_1}\wedge\dots \wedge \tilde{e}_{j_k})\\
=&\begin{cases}
 \tilde{e}_{j_1}\wedge\dots \wedge \tilde{e}_{j_{p-1}}\wedge \tilde{e}_{l}\wedge\dots \wedge\tilde{e}_{j_k},\  \text{if}\  q=j_p\ \text{for some}\ p\in\{1,\dots, k\};\\
 0,\  \text{otherwise},
\end{cases}\endaligned
\end{equation}
where $1\leq l,q\leq n-1$.

So the $\mathfrak{a}_{n-1}$-module $\text{wh}_{\b1}(\text{im}\pi_k)$ is actually isomorphic to
 the simple $\gl_{n-1}$-module $\bigwedge^k (\C^{n-1})$. By Lemma \ref{w-action}, the action of $x_{ij}$ on $\text{wh}_{\b1}(\text{im}\pi_k)$ is given by $E_{ij}-E_{ii}$.
Hence $\text{wh}_{\b1}(\text{im}\pi_k)$ is a simple $W$-module.

(3) Note that $\bigwedge^k (\C^{n-1})\ncong \bigwedge^l (\C^{n-1})$ as $\gl_{n-1}$-modules for $k \neq l $.

(4) By Theorem 1.2 in \cite{Pr2}, there is a bijection between isoclasses of simple objects in $W^0\text{-mod}$
 and the primitive ideals $I$ of $U(\sl_{n+1})$ such that $I \cap Z(U(\sl_{n+1})) = \ker \chi_0$  and $\text{Var}(I)=\overline{\mathcal{O}}_{e}$, where $\text{Var}(I)$ is the associated variety of $I$. By Lemma 5.1 in \cite{Pe}, the number of such primitive ideals is $n$. Then (4) follows from (3).

(5)  By the equivalence between $\ch_{\b1}^{0}$ and $W^0\text{-mod}$,  $L_{1}(\Omega),\cdots, L_{n}(\Omega)$ exhaust all simple modules in $\ch_{\b1}^{0}$, up to isomorphisms.

\end{proof}

Next we will classify simple modules in $\ch_{\b1}^\lambda$ for which $\lambda$ is not $\sl_{n+1}$-regular integral. By Lemma
\ref{equ-lemm}, we can assume that $\lambda=\gamma(c\epsilon_0)$ for some $c\in \C\backslash \Z$ or $c\in\{-1,\dots,-n\}$.
Let $\phi: U(\sl_{n+1})\rightarrow \cd_{n+1}$ be the algebra homomorphism such that
$\phi(e_{ij})=x_i\frac{\partial}{\partial x_j}$, for all  $i,j: 0\leq i\neq j\leq n$. Let $E_{n+1}=\sum_{i=0}^{n}x_i\frac{\partial}{\partial x_i}$ which is called the Euler vector field. It can be seen that
the image $$\text{im}\phi \subseteq D_{n+1}^{E_{n+1}}:=\{v\in \cd_{n+1}\mid [E_{n+1}, v]=0\}.$$ Using the lift by $\phi$, any
$D_{n+1}^{E_{n+1}}$-module $N$ becomes an $\sl_{n+1}$-module denoted by $N^{\phi}$.

\begin{lemma}\label{ann-lemm} Suppose that $c\in \C\backslash \Z$ or $c\in\{-1,\dots, -n\}$.
If $M$ is a module in $\ch_{\b1}^{\gamma (c\epsilon_0)}$, then $(\ker\phi) M=0$.
\end{lemma}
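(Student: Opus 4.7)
The plan is to extend $\phi$ to the localization $U_S$ and then exploit the tensor factorization $U_S=W\otimes B$ from Theorem \ref{main-iso-th}. Form the Ore localization $\cd_{n+1,T}$ of $\cd_{n+1}$ at $T=\{x_0,\partial_{x_1},\dots,\partial_{x_n}\}$; then each $\phi(e_{0i})=x_0\partial_{x_i}$ becomes invertible, producing an algebra homomorphism $\bar\phi\colon U_S\to\cd_{n+1,T}$ extending $\phi$. Since every $M\in\ch_{\b1}$ is canonically a $U_S$-module (each $e_{0i}$ acts bijectively) and the Ore calculus gives $S^{-1}\ker\phi=\ker\bar\phi$, the claim $(\ker\phi)M=0$ is equivalent to $(\ker\bar\phi)M=0$.

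The core computation is to evaluate $\bar\phi$ on the generators of $W$ from (\ref{x-w-def}). A direct reduction inside $\cd_{n+1,T}$, using that $x_0$ commutes with $\partial_{x_i}$ and $\partial_{x_i}$ commutes with $\partial_{x_j}$ for $1\le i,j\le n$, yields
$$\bar\phi(x_{ij})=\frac{E_{n+1}}{n+1},\qquad \bar\phi(\omega_k)=\frac{E_{n+1}(E_{n+1}+n+1)}{(n+1)^2},$$
where $E_{n+1}=\sum_{i=0}^n x_i\partial_{x_i}$, and both formulas are independent of $i,j,k$. Combined with Theorem \ref{main-iso-th}(b), this identifies $\bar\phi(W)=\C[E_{n+1}]$, and shows that $\ker\bar\phi\cap W$ is generated by the redundancies $x_{ij}-x_{i'j'}$ and $\omega_k-x_{12}(x_{12}+1)$ together with the Harish--Chandra relations $z-p_z((n+1)x_{12})$ for $z\in Z(U)=Z(W)$, where $p_z(E_{n+1})=\bar\phi(z)$.

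Using that $W$ and $B$ commute in $U_S$ and that $\bar\phi|_B$ is injective (the images of the PBW basis of $B$ in $\cd_{n+1,T}$ are linearly independent), any element of $\ker\bar\phi$ may be written as $\sum w_\alpha b_\alpha$ with $w_\alpha\in\ker\bar\phi\cap W$ and $b_\alpha\in B$. Via Proposition \ref{equ-hw} this reduces the claim to showing $(\ker\bar\phi\cap W)\cdot V=0$, where $V=\mathrm{wh}_{\b1}(M)$ is regarded as a $W$-module. For the final step, the generalized central-character hypothesis gives that each $z\in Z(U)$ acts on $V$ as $p_z(c)$ plus a nilpotent operator, so combined with $z-p_z((n+1)x_{12})\in\ker\bar\phi\cap W$, the operator $(n+1)x_{12}-c$ acts nilpotently on $V$. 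The assumption $c\in(\C\setminus\Z)\cup\{-1,\dots,-n\}$ then allows one to conclude that every generator of $\ker\bar\phi\cap W$ lies in $\bigcap_N\mathfrak{m}_c^N W$ (with $\mathfrak{m}_c$ the maximal ideal associated to the character $\chi_c\circ\bar\phi|_W$), and therefore annihilates the finite-dimensional module $V$.

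The main obstacle is precisely this last step: verifying that the full ideal $\ker\bar\phi\cap W$ is swallowed by the $\mathfrak{m}_c$-adic filtration on $W$ for the specified range of $c$. This is where the non-integrality or singularity of $c$ genuinely enters: for these $c$ the block is "locally one-generated" (matching the $\C[[x]]$ structure of Theorem \ref{GS-th}), whereas for regular integral $c$ the principal block is strictly richer (Theorem \ref{pi-image}) and the analogous statement would fail. A careful filtration argument using the explicit Harish--Chandra relations in $\ker\bar\phi\cap W$ together with the central-character action on $V$ is needed to complete the proof.
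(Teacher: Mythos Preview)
Your approach is genuinely different from the paper's, and while the setup is sound, the crucial final step has a real gap that is not merely a technicality.

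\textbf{What the paper does.} The paper's proof is short and indirect: it applies the weighting functor $\mathfrak{W}^u$ (from \cite{N16}) to $M$. By Lemma~\ref{free}, $M$ is free of finite rank over $U(\mh_n)$, so $\mathfrak{W}^u(M)$ is a uniformly bounded weight module in $\mathcal{B}_u^{\gamma(c\epsilon_0)}$. Then an external result (Theorem~8.7 in \cite{GS2}) gives $(\ker\phi)\,\mathfrak{W}^u(M)=0$, i.e.\ $(\ker\phi)M\subset I_{\alpha+u}M$ for every $\alpha\in\Z^n$. Freeness then forces $\bigcap_\alpha I_{\alpha+u}M=0$, so $(\ker\phi)M=0$. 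The heavy lifting is outsourced to the known annihilation result for weight modules; nothing is computed inside $W$.

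\textbf{Where your argument breaks.} Your reduction to $(\ker\bar\phi\cap W)V=0$ for $V=\mathrm{wh}_{\b1}(M)$ is reasonable, and the computations $\bar\phi(x_{ij})=\tfrac{1}{n+1}E_{n+1}$, $\bar\phi(\omega_k)=\tfrac{1}{(n+1)^2}E_{n+1}(E_{n+1}+n+1)$ are correct and match the one-dimensional module in Proposition~\ref{non-int-sing}(c). The problem is the step where you claim ``combined with $z-p_z((n+1)x_{12})\in\ker\bar\phi\cap W$, the operator $(n+1)x_{12}-c$ acts nilpotently on $V$.'' Here you are \emph{using} that an element of $\ker\bar\phi\cap W$ annihilates $V$ in order to prove that $\ker\bar\phi\cap W$ annihilates $V$: the reasoning is circular. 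The elements $x_{ij}$ are not central in $W$, so the central-character hypothesis alone gives you no control over the eigenvalues of $x_{12}$ on $V$. Likewise, the assertion that the generators $x_{ij}-x_{i'j'}$ and $\omega_k-x_{12}(x_{12}+1)$ lie in $\bigcap_N\mathfrak{m}_c^N W$ is simply false: these are fixed nonzero elements of $W$, independent of $c$, and the Krull-type intersection does not contain them for any $c$.

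\textbf{Why this gap is substantive.} What you are really trying to prove in that last step is that every finite-dimensional $W$-module in the block $W^{\gamma(c\epsilon_0)}\text{-mod}$ factors through the one-dimensional quotient $W\twoheadrightarrow\C[E_{n+1}]/(E_{n+1}-c)$. That is essentially the content of Proposition~\ref{non-int-sing}(c), whose proof in the paper \emph{relies on} Lemma~\ref{ann-lemm}. So your route, as written, runs into exactly the result you are trying to establish. To close the gap without circularity you would need an independent argument classifying the simple $W$-modules in this block (e.g.\ via highest-weight theory for $W(e)$ from \cite{BK} or a direct analysis of the relations in $W$), which is considerably more work than the paper's four-line transfer via the weighting functor.
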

\begin{proof} We use the weighting functor $\mathfrak{W}$ defined by (\ref{3.3}).
By Lemma \ref{free}, $M$ is a free $U(\mh_n)$-module of finite rank. Choose  $u\in \C^n$ such that $\mathcal{B}^{\gamma (c\epsilon_0)}_{u}$ is not empty. Then
$\mathfrak{W}^u(M)\in \mathcal{B}_u^{\gamma (c\epsilon_0)}$.
By Theorem 8.7 in \cite{GS2}, $(\ker \phi) (\mathfrak{W}^{u}(M))=0$. So $(\ker \phi) M\subset I_{\alpha +u}M$ for any $\alpha\in \Z^n$. Since $M$ is a free $U(\mh_n)$-module of finite rank, we have that
$\cap_{\alpha\in \Z^n}(I_{\alpha+u} M)=0$. So $(\ker \phi) M=0$.
\end{proof}

\begin{proposition}\label{non-int-sing}Suppose that $c\in \C\backslash \Z$ or $c\in\{-1,\dots,-n\}$.
\begin{enumerate}[$($a$)$]
\item If $N$ is a $D_{n+1}^{E_{n+1}}$-module such that
$E_{n+1}$ acts as a scalar $c$, then the $\sl_{n+1}$-module $N^{\phi}$ has the central character $\chi_{c\gamma(\epsilon_0)}$.
 \item
If $M$ is a simple  module in $\ch_{\b1}^{\gamma(c\epsilon_0)}$,  then $M\cong T(\Omega, V_{-\frac{c}{n+1}})$,  where
 $V_{\frac{-c}{n+1}}$ is the one dimensional $\gl_n$-module defined by the
 linear map:
 $$E_{ij}\mapsto \frac{-\delta_{ij}c}{n+1}, \ \ i,j=1,\dots, n.$$
\item If $V$ is a simple  module in $W^{\gamma (c\epsilon_0)}\text{-mod}$,  then $V$ is isomorphic to the one-dimensional $W$-module $V'_{-\frac{c}{n+1}}$ defined by the map:
\begin{equation}\label{one-dim-w-module}x_{ij}\mapsto \frac{c}{n+1},\ \omega_k \mapsto \frac{c^2}{(n+1)^2}+\frac{c}{n+1}, \ i\neq j, k =1,\dots, n.\end{equation}
\end{enumerate}
\end{proposition}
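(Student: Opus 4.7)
The plan is in three steps corresponding to (a)--(c), with (a) feeding into (b) and (c) reducing to (b) via the Skryabin-type equivalence of Proposition \ref{equ-hw}. For (a), the key input is $\phi(Z(U(\sl_{n+1})))\subseteq\C[E_{n+1}]$: any $\phi(z)$ commutes with every $\phi(e_{ij})=x_i\partial_j$ ($i\neq j$), so $\phi(z)$ is an $\sl_{n+1}$-invariant element of $D_{n+1}$, and a standard invariant-theory/PBW computation using the $E_{n+1}$-weight grading on $D_{n+1}$ identifies the $\sl_{n+1}$-commutant in $D_{n+1}$ with $\C[E_{n+1}]$. Writing $\phi(z)=f_z(E_{n+1})$ and testing on the decomposition $\C[x_0,\dots,x_n]=\bigoplus_{k\geq 0}S^k(\C^{n+1})$ — each $S^k$ being the simple $\sl_{n+1}$-module of highest weight $k\epsilon_0\leftrightarrow\gamma(k\epsilon_0)\in\mh_n^*$, with $E_{n+1}$ acting as $k$ — gives $f_z(k)=\chi_{\gamma(k\epsilon_0)}(z)$ for every $k\in\Z_{\geq 0}$. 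By the Harish-Chandra isomorphism the map $t\mapsto\chi_{\gamma(t\epsilon_0)}(z)$ is polynomial in $t$; two polynomials agreeing at infinitely many points coincide, hence $f_z(c)=\chi_{c\gamma(\epsilon_0)}(z)$ for all $c\in\C$, proving (a).

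For (b), I first exhibit $T(\Omega,V_{-c/(n+1)})$ inside $\ch_{\b1}^{\gamma(c\epsilon_0)}$. The $\gl_n$-weight $\mu=(-c/(n+1),\dots,-c/(n+1))$ lies in $\Lambda_n^+$, and the $\sl_{n+1}$-weight attached to it in Lemma \ref{tensor-mod}(a) is $\frac{nc}{n+1}\epsilon_0-\frac{c}{n+1}\sum_{i\geq 1}\epsilon_i=\gamma(c\epsilon_0)$; so $T(\Omega,V_{-c/(n+1)})\in\ch_{\b1}^{\gamma(c\epsilon_0)}$. Simplicity follows from Lemma \ref{tensor-mod}(b) once I observe via Lemma \ref{regular}(a) that $\gamma(c\epsilon_0)$ is regular iff $c\notin\{-1,\dots,-n\}$ and integral iff $c\in\Z$, so it fails to be regular-integral exactly when $c\in(\C\setminus\Z)\cup\{-1,\dots,-n\}$. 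For the uniqueness of the simple object I pass to $W\text{-mod}$ via Proposition \ref{equ-hw} and invoke Premet's Theorem 1.2 in \cite{Pr2}, exactly as in the proof of Theorem \ref{pi-image}(4): simple objects of $W^{\gamma(c\epsilon_0)}\text{-mod}$ correspond bijectively to primitive ideals $I\subset U(\sl_{n+1})$ with $I\cap Z=\ker\chi_{\gamma(c\epsilon_0)}$ and $\mathrm{Var}(I)=\overline{\mathcal O}_e$, and for our non-regular-integral central character this count equals $1$ (the analogue of Lemma 5.1 in \cite{Pe}), so $T(\Omega,V_{-c/(n+1)})$ is the unique simple.

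Part (c) then drops out of (b) and Proposition \ref{equ-hw}: a simple $V\in W^{\gamma(c\epsilon_0)}\text{-mod}$ equals $\mathrm{wh}_{\b1}(M)$ for the unique simple $M\cong T(\Omega,V_{-c/(n+1)})$, so $V\cong V_{-c/(n+1)}$ as a vector space, and the $W$-action is read off from Lemma \ref{w-action}: on $V_{-c/(n+1)}$, $E_{ij}v=0$ for $i\neq j$ and $E_{ii}v=-(c/(n+1))v$, hence $x_{ij}v=(E_{ij}-E_{ii})v=(c/(n+1))v$, while $\omega_k v=\sum_j(E_{kj}E_{jj}-E_{kj})v$ collapses to its $j=k$ term $(E_{kk}^2-E_{kk})v=(c^2/(n+1)^2+c/(n+1))v$, matching (\ref{one-dim-w-module}). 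The main obstacle is the uniqueness count in (b): verifying that exactly one primitive ideal of $U(\sl_{n+1})$ has central character $\chi_{\gamma(c\epsilon_0)}$ and associated variety $\overline{\mathcal O}_e$ for every $c\in(\C\setminus\Z)\cup\{-1,\dots,-n\}$. A clean route is to cite the relevant portion of the type-$A$ primitive ideal classification (Duflo/Borho--Jantzen/Joseph); failing that, one can attempt a direct argument using (a) — combining Lemma \ref{ann-lemm} with $\phi(C_2)=\tfrac{n}{n+1}E_{n+1}(E_{n+1}+n+1)$ to force $(E_{n+1}-c)(E_{n+1}+c+n+1)$ to annihilate $M$, and then using the Whittaker condition and the free $U(\mh_n)$-module structure from Lemma \ref{free} to pin down $M$.
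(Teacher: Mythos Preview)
Your treatment of (a) and (c) is essentially the paper's. For (a) the paper also reduces to $\phi(Z(U))\subseteq\C[E_{n+1}]$, but rather than interpolating through the $S^k(\C^{n+1})$ it exhibits, for each $c$, a highest-weight submodule of $\{f\in x_0^c\C[x_0^{\pm1},\dots,x_n^{\pm1}]:E_{n+1}f=cf\}$ generated by $x_0^c$, with highest weight $\gamma(c\epsilon_0)$; your polynomial-interpolation variant is equally valid. Part (c) is identical.

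The substantive difference is in (b). The paper avoids primitive-ideal counting entirely. It builds the $D_{n+1}$-module $\Omega(\mathbf{1})=D_{n+1}/\langle x_0-1,\ \partial_1-1,\dots,\partial_n-1\rangle$, passes to the $D_{n+1}^{E_{n+1}}$-module $\Omega(\mathbf{1},c)=\Omega(\mathbf{1})/(E_{n+1}-c)\Omega(\mathbf{1})$, writes out the $\sl_{n+1}$-action through $\phi$ on $\Omega(\mathbf{1},c)\cong\C[d_1,\dots,d_n]$ explicitly, and checks by hand that $\Omega^\phi(\mathbf{1},c)\cong T(\Omega,V_{-c/(n+1)})$. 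Simplicity is Lemma~\ref{tensor-mod}(b), as you have it; uniqueness is then deduced directly from Lemma~\ref{ann-lemm}, which forces any simple $M\in\ch_{\b1}^{\gamma(c\epsilon_0)}$ to factor through $\phi$ and hence to come from this construction. Your closing ``direct argument'' sketch is precisely this route, and it is the one the paper actually takes. What it buys is a self-contained, elementary uniqueness proof with no appeal to \cite{Pr2} or the Borho--Jantzen/Joseph classification, at the cost of one explicit module computation. Your Premet-bijection route is correct in principle, but as you rightly flag, the needed primitive-ideal count for non-integral and singular-integral central characters is not literally Lemma~5.1 of \cite{Pe} (which treats the principal block) and would require importing heavier machinery than the paper uses anywhere else.
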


\begin{proof}
(a) Since $\phi(Z(U))\subset Z(D_{n+1}^{E_{n+1}})=\C[E_{n+1}]$, $N^{\phi}$ have the same central character for all $D_{n+1}^{E_{n+1}}$-modules $N$ such that
$E_{n+1}$ act as the same scalar $c$. Let $$F_{c}=\{ f(x)\in
x_0^{c}\C[x_0^{\pm 1},\dots, x_n^{\pm 1}]\mid E_{n+1}f(x)= cf(x) \}.$$
Then the $\sl_{n+1}$-submodule of $F_{c}^{\phi}$  generated by $x_0^{c}$ is a highest weight  module of
 $\sl_{n+1}$ with the highest weight $\gamma(c\epsilon_0)$.

(b) Let $I_{\b1}$ be the left ideal of $D_{n+1}$
generated by $$x_0-1, \frac{\partial}{\partial x_1}-1,\dots, \frac{\partial}{\partial x_n}-1.$$
Then we have the $D_{n+1}$-module $\Omega(\b1)=D_{n+1}/I_{\b1}$. Consider the $D_{n+1}^{E_{n+1}}$-module
$\Omega(\b1,c)=\Omega(\b1)/(E_{n+1}-c)\Omega(\b1)$.

 As a vector space
 $\Omega(\b1, c)=\C[d_{1},\dots, d_{n}]$. The action of $D_{n+1}^{E_{n+1}}$ of  $\Omega(\b1,c)$
 is defined by
 $$\aligned \ x_i\frac{\partial}{\partial x_j}f(d)&=f(d+e_j-e_i)d_i, \\
 x_0\frac{\partial}{\partial x_0}f(d)& =(c-d_1-\dots-d_n)f(d),\\
 x_0\frac{\partial}{\partial x_i}f(d)& =f(d+e_i), \\
 x_i\frac{\partial}{\partial x_0}f(d)& =(c-d_1-\dots-d_n+1)f(d-e_i)d_i, \\
 \endaligned$$ where $1\leq i,j\leq n, f(d)\in \Omega(\b1, c)$.

 Consequently, $\sl_{n+1}$ acts on $\Omega^{\phi}(\b1, c)$ as follows:
  $$\aligned e_{ij}f(d)&=f(d+e_j-e_i)d_i, \\
  h_if(d)& =(d_i-\frac{c}{n+1})f(d), \\
 e_{i0}f(d)& =(c-d_1-\dots-d_n+1)f(d-e_i)d_i, \\
 \endaligned$$ where $1\leq i \neq j\leq n, f(d)\in \Omega(\b1, c)$.

Then it can be checked that $\Omega^{\phi}(\b1, c)\cong T(\Omega, V_{\frac{-c}{n+1}})$.
 By (b) in Lemma \ref{tensor-mod},  the $\sl_{n+1}$-module $T(\Omega, V_{\frac{-c}{n+1}})$ is simple when  $c\in \C\backslash \Z$ or $c\in\{-1,\dots,-n\}$. Then Claim (b) follows from Lemma \ref{ann-lemm}.

(c) By Corollary \ref{p7} and Lemma \ref{w-action}, $V\cong \text{wh}_{\b1}(T(\Omega, V_{-\frac{c}{n+1}}))\cong V'_{-\frac{c}{n+1}}$.
\end{proof}

\subsection{ The equivalence between $\ch_{\b1}^{\lambda}$ and $\mathcal{C}_{\mu}^\lambda$}

  Let $\mu\in\C^n, \lambda\in\mh_n^*$ such that  $\mathcal{C}^\lambda_{\mu} $ is nonempty.  We define the functor $F_1: \mathcal{C}^\lambda_{\mu} \rightarrow W^\lambda\text{-mod}$ such that $F_1(M)=M_{\mu}$. From $[W, \mh_n]=0$,   we have $W M_{\mu}\subset M_{\mu}$. So $M_\mu \in W\text{-mod}$.
 Conversely, for  a $V\in  W\text{-mod}$, let each $h_{i}$
 act  on it as the scalar $\mu_i$.
 Let $$G_1(V)=\text{Ind}_{U(\mh_n)W}^{U_S}V=U_{S}\otimes_{U(\mh_n)W} V.$$ It is clear that $G_1(V)=\C[e_{01}^{\pm 1},\dots, e_{0n}^{\pm 1}]\otimes V$.
  From  (\ref{ws-relation}),
   the $\sl_{n+1}$-module structure on $G_1(V):=\C[e_{01}^{\pm 1},\dots, e_{0n}^{\pm 1}]\otimes V$ satisfies:
  $$\aligned h_k \cdot (e^{\br}\otimes v)& =(\mu_k-r_k)(e^{\br}\otimes v),\\
  e_{0k} \cdot (e^{\br}\otimes v)& =e^{\br+e_k}\otimes v,\\
    \endaligned$$
      $$\aligned
   e_{lj}\cdot  (e^{\br}\otimes v)& =e^{\br-e_l+e_j}\otimes x_{lj} v+(\mu_l-r_l+1)e^{\br-e_l+e_j}\otimes v,\ l\neq j,\\
  e_{l0}\cdot (e^{\br}\otimes v)
  &= e^{\br-e_l}\otimes\Big( \omega_l v-\sum_{j=1,\atop j\neq l}^n(\mu_j-r_j) x_{lj}v- (|\mu|-|\br|)(\mu_l-r_l+1) v\Big),
  \endaligned$$
where $ e^{\br}=e_{01}^{r_1}\dots e_{0n}^{r_n}, \br=(r_1,\dots, r_n)\in \Z^n, 1\leq l, j, k\leq n$. Thus we have a functor $G_1$ from
$W\text{-mod}$ to the category of weight modules over $\sl_{n+1}$. In the following theorem, we show that  each  $W^\lambda\text{-mod}$  is  equivalent to a block of the cuspidal category.

\begin{theorem}
\label{equ-cw}Let $ \lambda\in\mh_n^*$. Then there is
$\mu\in\C^n$ such that
 \begin{enumerate}[$($a$)$]
 \item   $G_1(V)\in \mathcal{C}_{\mu}^\lambda$, for any $V\in W^\lambda\text{-mod}$.
 \item $W^\lambda\text{-mod}$   is equivalent to $\mathcal{C}_{\mu}^\lambda$.
 \end{enumerate}
\end{theorem}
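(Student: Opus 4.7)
The plan is to proceed in three stages. First, I choose $\mu\in\C^n$. By Lemma \ref{equ-lemm} it suffices to treat representative $\lambda$ (regular integral, singular integral, or non-integral). In each case the remark following Theorem \ref{GS-th} supplies explicit conditions making $\cc_\mu^\lambda$ nonempty; I impose additional genericity on $\mu$ so that, for every simple object $V$ of $W^\lambda\text{-mod}$ (finitely many by Theorem \ref{pi-image} and Proposition \ref{non-int-sing}, each finite-dimensional) and every $\br\in\Z^n$, the operators
\[
x_{lj}+(\mu_l-r_l+1)\mathrm{id},\qquad \omega_l-\sum_{j\neq l}(\mu_j-r_j)x_{lj}-(|\mu|-|\br|)(\mu_l-r_l+1)\mathrm{id}
\]
act bijectively on $V$. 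This reduces to avoiding finitely many cosets of $\Z$ in $\C$ for each $\mu_l$, since the eigenvalue spectra of $x_{lj},\omega_l$ on the finite list of simples form a finite set.

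Next, I verify that $F_1$ and $G_1$ land in the claimed categories. For $F_1$: $M_\mu$ is finite-dimensional (by cuspidality), preserved by $W$ (since $[W,\mh_n]=0$), and carries the generalized central character $\chi_\lambda$ via the embedding $Z(U(\sl_{n+1}))\hookrightarrow Z(U_S)=Z(W)$ from Theorem \ref{main-iso-th}(a). For $G_1$: the explicit formulas show $G_1(V)=\bigoplus_{\br\in\Z^n}e^\br\otimes V$ is a weight module with uniformly bounded weight multiplicity $\dim V$, central character $\chi_\lambda$, and bijective $e_{0k}$-action. The genericity of $\mu$ forces $e_{lj}$ and $e_{k0}$ to act injectively on every weight space; combined with bijectivity, this shows $V$ generates $G_1(V)$ as an $\sl_{n+1}$-module (reaching $e^\br\otimes V$ via iterated application of $e_{k0}$ for negative coordinates and $e_{0k}$ for positive ones), so $G_1(V)\in\cc_\mu^\lambda$.

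Finally, I show $F_1$ and $G_1$ are inverse. The grading yields $F_1G_1(V)=(G_1(V))_\mu=1\otimes V\cong V$ as $W$-modules, so $F_1G_1\cong\mathrm{id}$. For $M\in\cc_\mu^\lambda$, cuspidality gives injectivity of both $e_{0k}\colon M_\alpha\to M_{\alpha-e_k}$ and $e_{k0}\colon M_{\alpha-e_k}\to M_\alpha$; on finite-dimensional weight spaces these force $e_{0k}$ to be bijective, so $M$ extends to a $U_S$-module. The canonical map $G_1(M_\mu)=U_S\otimes_{U(\mh_n)W}M_\mu\to M$ is then surjective (since $M$ is generated over $\C[e_{01}^{\pm 1},\ldots,e_{0n}^{\pm 1}]$ by $M_\mu$) and induces dimension-preserving maps on each weight space, hence is an isomorphism.

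The main obstacle is the bijectivity of the $e_{k0}$-operator in Step 1: because its dependence on $\br$ is polynomial of degree two rather than the simple affine dependence in the $e_{lj}$ case, verifying bijectivity for every $\br\in\Z^n$ requires matching the eigenvalue spectra of $x_{lj},\omega_l$ (computable from Lemma \ref{w-action} and the classification of simples) against the cosets permitted by the GS characterization. Once this compatibility is established, cuspidality of $G_1(V)$ is immediate and the equivalence follows.
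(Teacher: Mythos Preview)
Your outline matches the paper's approach: reduce via Lemma~\ref{equ-lemm} to $\lambda=0$ or $\lambda=\gamma(c\epsilon_0)$, use the classification of simples in $W^\lambda\text{-mod}$ (Theorem~\ref{pi-image}, Proposition~\ref{non-int-sing}), verify injectivity of the root vectors on $G_1(V)$ for simple $V$, and then establish the equivalence via $F_1G_1\cong\mathrm{id}$, $G_1F_1\cong\mathrm{id}$. Part~(b) is argued the same way in both. One omission: the passage from simple $V$ to arbitrary $V\in W^\lambda\text{-mod}$ requires an induction on length (using exactness of $G_1$ and that injectivity of a root vector on a short exact sequence follows from injectivity on the ends), which the paper states explicitly and you should too.

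The genuine gap is your justification for injectivity of the $e_{l0}$-operator. You assert that it ``reduces to avoiding finitely many cosets of $\Z$ in $\C$ for each $\mu_l$, since the eigenvalue spectra of $x_{lj},\omega_l$ \ldots\ form a finite set.'' This is not valid on abstract grounds: the operator depends quadratically on the shifts $\mu_j-r_j$, so its determinant on $V$ is a polynomial of degree $>1$ in those variables, and the integer-lattice zero locus of such a polynomial is not in general controlled by finitely many coset conditions on the individual $\mu_l$ (indeed, the actual condition $|\mu|\notin\Z$ involves the sum, not a single coordinate). What makes the GS conditions suffice is a structural fact specific to these simples, and the paper computes it explicitly: by Lemma~\ref{w-action} one has $\omega_k=0$ on $\mathrm{wh}_{\b1}(L_k(\Omega))$ and $x_{lj}$ acts as $E_{lj}-E_{ll}$; in the standard basis $\{e_J\}$ of $\bigwedge^k\C^n$ the part $\sum_{j\neq l}(\mu_j-r_j)E_{lj}$ is block-nilpotent (it squares to zero and maps the span of $\{e_J:l\notin J\}$ into the span of $\{e_J:l\in J\}$), while the remaining diagonal piece \emph{factors} as $(|\mu|-|\br|+\delta_{l,J})(\mu_l-r_l+1-\delta_{l,J})$. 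It is this factorization into linear terms, not the mere finiteness of spectra, that converts the quadratic condition into the two linear non-integrality conditions $\mu_l\notin\Z$, $|\mu|\notin\Z$. Your final paragraph correctly flags this as the obstacle; the resolution is exactly this explicit computation, not a generic-position argument.
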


\begin{proof}
(a)
By Lemma \ref{equ-lemm}, it suffices to consider that $\lambda=0$ or $\lambda=\gamma (c\epsilon_0)$ with $c\in \C\backslash \Z$ or $c\in\{-1,\dots, -n\}$.

\noindent{\bf Case 1.} $\lambda=0$. We choose  $\mu\in\C^n$ satisfying that
\begin{equation}\label{no-int1}\mu_1,\dots, \mu_n,  -|\mu|\not\in \Z.\end{equation}
 Corollary 6.14 in \cite{GS1} implies that   $\mathcal{C}_{\mu}^0$ is nonempty.

By (4) in Proposition \ref{pi-image}, $\text{wh}_{\b1}(L_{1}(\Omega)),\dots, \text{wh}_{\b1}(L_{n}(\Omega))$ are all simple objects in $W^0\text{-mod}$, with the action of $W$  given by (\ref{x-w-act}).

To show that  $G_1(V)\in \mathcal{C}_{\mu}^0$, we need to explain  that
root vectors $e_{ij}, e_{k0}$ act injectively on $G_1(V)$.
 First we suppose that $V$ is simple, i.e., $V= \text{wh}_{\b1}(L_{k}(\Omega))$ for some $1\leq k\leq n$. From (\ref{x-w-act}) and $\omega_k \text{wh}_{\b1}(L_{k}(\Omega))=0$, the actions of $e_{lj}, e_{l0}$ on $G_1(\text{wh}_{\b1}(L_{k}(\Omega)))$ satisfy that
 $$\aligned e_{lj}\cdot  (e^{\br}\otimes v)
   &=e^{r-e_l+e_j}\otimes (E_{lj}-E_{ll}+\mu_l-r_l+1) v,\  l\neq j,\\
  e_{l0}\cdot (e^{\br}\otimes v)
  &= -e^{\br-e_l}\otimes\Big(\sum_{j=1}^n(\mu_j-r_j) (E_{lj}-E_{ll}) v +  (|\mu|-|\br|)(\mu_l-r_l+1)v\Big),
  \endaligned$$
where $v\in \text{wh}_{\b1}(L_{k}(\Omega)), 1\leq l, j, k\leq n$.


Note that $\text{wh}_{\b1}(L_{k}(\Omega))\subset \bigwedge^{k}\C^n$, and
 $\bigwedge^{k}\C^n$ is spanned by
$$e_{J}:=e_{j_1}\wedge\dots \wedge e_{j_k} , 1\leq j_1<\cdots < j_{k}\leq n,$$ where $J=(j_1,\dots, j_{k})\in \Z_{>0}^{k}$. Moreover,
$E_{ll}e_J=\delta_{l,J}e_J$, where $$\delta_{l,J}=\begin{cases}
1,\  \text{if}\  l=j_p\ \text{for some}\ p\in\{1,\dots, k\};\\
 0,\  \text{otherwise}.
\end{cases}
$$
For any nonzero element $v=\sum_{J\in \Gamma }a_Je_J$ in $V$, where $0\neq a_J\in \C$, $\Gamma$ is a finite subset of $\Z_{>0}^{k}$, we have that
$$ (E_{lj}-E_{ll}+\mu_l-r_l+1) v=\sum_{J\in \Gamma }a_JE_{lj}e_J
+ \sum_{J\in \Gamma} a_J (-\delta_{l,J}+\mu_l-r_l+1)e_J, \ \ l\neq j.$$
Note that $E_{lj}e_J=0$ or $e_{J'}$ for some $J'\neq J$, and $-\delta_{l,J}+\mu_l-r_l+1\neq 0$ for any $J\in \Gamma$. So $ (E_{lj}-E_{ll}+\mu_l-r_l+1) v\neq 0$, and hence $e_{lj}$ acts injectively on $G_1(L_{k}(\Omega))$, for any $1\leq l\neq j\leq n$.
Furthermore, we can see that
$$\aligned & \sum_{j=1}^n(\mu_j-r_j) (E_{lj}-E_{ll}) v +  (|\mu|-|\br|)(\mu_l-r_l+1)v\\
= & \sum_{J\in \Gamma} \sum_{j=1, j\neq l}^n(\mu_j-r_j) a_JE_{lj}e_J-(|\mu|-|\br|-\mu_l+r_l)\sum_{J\in \Gamma}a_JE_{ll}e_J\\
& + (|\mu|-|\br|)(\mu_l-r_l+1)\sum_{J\in \Gamma}a_Je_J\\
=& \sum_{J\in \Gamma} \sum_{j=1, j\neq l}^n(\mu_j-r_j) a_JE_{lj}e_J
+\sum_{J\in \Gamma}a_J(|\mu|-|\br|+\delta_{l,J})(\mu_l-r_l+1-\delta_{l,J})e_J.
\endaligned $$
Since $(|\mu|-|\br|+\delta_{l,J})(\mu_l-r_l+1-\delta_{l,J}) \neq 0$, we can also show that $e_{l0}$ acts injectively on $G_1(\text{wh}_{\b1}(L_{k}(\Omega)))$ for any $1\leq l\leq n$.
By induction on the length of the $W$-module $V$, we can show that
$G_1(V)$ is a cuspidal module for any $V\in W^0\text{-mod}$.

\noindent{\bf Case 2.}  $\lambda=\gamma(c\epsilon_0)$ with $c\in \C\backslash \Z$ or $c\in\{-1,\dots,-n\}$.

Choose  $\mu\in\C^n$ such that
\begin{equation}\label{no-int2}\frac{c}{n+1}+\mu_1, \cdots,\frac{c}{n+1}+\mu_n, \frac{c}{n+1}-|\mu|\not\in \Z.\end{equation}

By Corollary 2.6 in \cite{GS1}, $\mathcal{C}_{\mu}^{\gamma(c\epsilon_0)}$
is nonempty.
From Proposition \ref{non-int-sing},  the module $V'_{-\frac{c}{n+1}}$ defined by (\ref{one-dim-w-module}) is the unique  simple object
in $W^{\gamma (c\epsilon_0)}\text{-mod}$.
The actions of $e_{ij}, e_{k0}$ on $G_1(V'_{-\frac{c}{n+1}})$ are given by
$$\aligned
   e_{ij}\cdot  (e^{\br}\otimes v)& =(\frac{c}{n+1}+\mu_i-r_i+1)e^{\br-e_i+e_j}\otimes v,\ i\neq j,\\
  e_{k0}\cdot (e^{\br}\otimes v)& =(\frac{c}{n+1}-|\mu|+|\br|)(\frac{c}{n+1}+\mu_k-r_k+1) e^{r-e_k}\otimes v,
  \endaligned$$
  where $v\in V'_{-\frac{c}{n+1}}$.
The condition (\ref{no-int2}) forces that $e_{ij}, e_{k0}$ act injectively on $G_1(V'_{-\frac{c}{n+1}})$.  Again by induction on the length of $V$, we have that
$G_1(V)$ is a cuspidal module for any $V\in W^{\gamma(c\epsilon_0)}\text{-mod}$.

(b)
Since $F_1G_1(V)\cong V$ for any $V\in W\text{-mod} $,
 we have $F_1G_1\cong\text{id}$. For any $M\in \mathcal{C}_{\mu}^{\lambda}$,
 set $V=M_{\mu}$. Since each $e_{0i}$ acts bijectively on $M$, as a vector
 space $M=\C[e_{01}^{\pm 1},\dots, e_{0n}^{\pm 1}]\otimes V$.  By  (\ref{ws-relation}) and the isomorphism $U_{S}\cong W\otimes  B$,
we see that $M\cong G_1(V)=G_1F_1(M)$. So  $G_1F_1\cong\text{id}$,
and hence $F_1: \mathcal{C}^\lambda_{\mu} \rightarrow W^\lambda\text{-mod}$
is an equivalence.
\end{proof}


We have known that $\ch_{\b1}=\oplus_{\chi_{\lambda}\in\Theta} \ch_{\b1}^{\lambda}$.
By Theorem \ref{GS-th}, Corollary \ref{p7}, Proposition \ref{non-int-sing} and Theorem  \ref{equ-cw}, we have the
following characterizations of all blocks $\ch_{\b1}^{\lambda}$ of $\ch_{\b1}$.

\begin{theorem}\label{main-th} Let $\lambda\in \mh_n^*$
\begin{enumerate}[$($a$)$]
 \item If $\lambda$ is  singular or non-integral, then  $\ch^\lambda_{\b1}$ is equivalent to the category of
finite-dimensional modules over $\C[[x]]$.
 \item   If $\lambda$ is  regular integral, then $\ch^\lambda_1$  is equivalent to the category
of finite dimensional locally nilpotent modules over the quiver
$$
\xymatrix{\bullet \ar@(ul,ur)[]|{y} \ar@<0.5ex>[r]^x & \bullet
\ar@<0.5ex>[l]^x \ar@<0.5ex>[r]^y & \bullet \ar@<0.5ex>[l]^y
\ar@<0.5ex>[r]^x & \ar@<0.5ex>[l]^x... \ar@<0.5ex>[r] & \bullet
\ar@<0.5ex>[l] \ar@(ul,ur)[]|{}}
$$
containing $n$ vertices with relations $xy=yx=0$.
   \end{enumerate}
\end{theorem}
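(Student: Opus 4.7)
The plan is to prove Theorem \ref{main-th} as a direct corollary by composing the three equivalences of categories that have already been established. Given $\lambda \in \mh_n^*$, I chain together: Corollary \ref{p7}, which identifies $\ch_{\b1}^\lambda$ with the block $W^\lambda\text{-mod}$; Theorem \ref{equ-cw}, which produces a $\mu \in \C^n$ such that $W^\lambda\text{-mod} \simeq \mathcal{C}_\mu^\lambda$; and Theorem \ref{GS-th} (Grantcharov--Serganova), which describes $\mathcal{C}_\mu^\lambda$ by the algebra $\C[[x]]$ in the singular/non-integral case and by the indicated Gelfand--Ponomarev quiver with relations $xy = yx = 0$ in the regular integral case. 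Both parts (a) and (b) then follow immediately from the corresponding part of Theorem \ref{GS-th}.

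In carrying out the second step, I need only remark that the $\mu$ delivered by the proof of Theorem \ref{equ-cw} is explicitly chosen to satisfy condition (\ref{no-int1}) when $\lambda = 0$ and condition (\ref{no-int2}) when $\lambda = \gamma(c\epsilon_0)$; by the Remark following Theorem \ref{GS-th}, these are precisely the non-emptiness criteria for $\mathcal{C}_\mu^\lambda$, so Theorem \ref{GS-th} genuinely applies to give the stated description. For a general $\lambda$ not of the form $0$ or $\gamma(c\epsilon_0)$, one first appeals to Lemma \ref{equ-lemm} (parts (2)--(4)) to reduce to one of these canonical representatives via a translation-functor equivalence, which preserves whether $\lambda$ is singular, non-integral, or regular integral; the corresponding equivalence on the cuspidal side is then handled inside Theorem \ref{equ-cw}.

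The main obstacle, such as it is, is merely bookkeeping: ensuring that the block decomposition by generalized central character is respected by each of the three functors in the chain, and that the case distinction (singular / non-integral vs.\ regular integral) on the $\sl_{n+1}$ side matches the case distinction on the cuspidal side. Since Corollary \ref{p7} and Theorem \ref{equ-cw} are defined so as to preserve the central character $\chi_\lambda$ by construction (the functors $F,G,F_1,G_1$ intertwine the $Z(U_S) \cong Z(W)$ actions), and since the type of $\lambda$ is a property of $\lambda$ itself rather than of $\mu$, no genuine difficulty arises. The substantive content of the theorem lies entirely in Theorem \ref{equ-cw}; once that is in hand, the proof of Theorem \ref{main-th} is a two-line composition of equivalences followed by citation of Theorem \ref{GS-th}.
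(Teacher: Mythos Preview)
Your proposal is correct and matches the paper's own argument: the paper proves Theorem \ref{main-th} in a single sentence by citing Theorem \ref{GS-th}, Corollary \ref{p7}, Proposition \ref{non-int-sing}, and Theorem \ref{equ-cw}, which is exactly the chain of equivalences you spell out. Your additional remarks about non-emptiness of $\mathcal{C}_\mu^\lambda$ and the reduction via Lemma \ref{equ-lemm} simply make explicit the bookkeeping that the paper leaves implicit.
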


\subsection{The category $\ch_{\mathbf{0}}$}
For $\lambda\in \Lambda_n^{+}$,
let $V(\lambda)$ be the simple finite dimensional  $\ml_n$-module with the highest weight $\lambda$. Then $V(\lambda)$  can be extended to a module over the parabolic subalgebra $\mathfrak{p}_n:=\ml_n\oplus \mm_n$ by defining $\mm_n V(\lambda)=0$. The parabolic Verma module $M_{\mathfrak{p}_n}(\lambda)$ over $\sl_{n+1}$ is
$U(\sl_{n+1})\otimes_{U(\mathfrak{p}_n)}V(\lambda)$. Denote the unique simple  quotient of $M_{\mathfrak{p}_n}(\lambda)$ by $L_{\mathfrak{p}_n}(\lambda)$.

\begin{proposition}\label{h-zero} Any simple module $M$ in $\ch_{\mathbf{0}}$ is isomorphic to a simple quotient of some $T(A_n, V(\lambda))$, where $\lambda\in \Lambda_n^{+}$.
\end{proposition}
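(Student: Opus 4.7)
The plan is to identify $M$ as the simple quotient $L_{\mathfrak{p}_n}(\lambda)$ of a parabolic Verma module $M_{\mathfrak{p}_n}(\lambda)$, and then to show that $T(A_n,V(\lambda))$ is likewise a quotient of $M_{\mathfrak{p}_n}(\lambda)$, so the two share the same simple quotient.

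First I would verify that $\mathrm{wh}_{\mathbf{0}}(M)\neq 0$. For any nonzero $v\in M$, the commuting locally nilpotent operators $e_{01},\ldots,e_{0n}$ make $U(\mm_n)v$ finite dimensional; on this space $\mm_n$ acts by commuting nilpotent operators, so a common kernel vector exists. Because $[e_{0j},\ml_n]\subset \mm_n$ and $[\mm_n,\mm_n]=0$, both $\ml_n$ and $\mm_n$ preserve $\mathrm{wh}_{\mathbf{0}}(M)$, with $\mm_n$ acting trivially; hence $\mathrm{wh}_{\mathbf{0}}(M)$ is a finite dimensional $\mathfrak{p}_n$-module on which $\ml_n\cong\gl_n$ acts reductively. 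Choose an irreducible $\ml_n$-submodule $N\subset\mathrm{wh}_{\mathbf{0}}(M)$; then $N\cong V(\lambda)$ for some $\lambda\in\Lambda_n^+$. The inclusion $V(\lambda)\cong N\hookrightarrow M$ is a $\mathfrak{p}_n$-homomorphism (both sides carry the same trivial $\mm_n$-action), and by the universal property of induction it extends to an $\sl_{n+1}$-homomorphism
\[
\varphi\colon M_{\mathfrak{p}_n}(\lambda)\longrightarrow M,
\]
which is surjective since $M$ is simple and $\mathrm{Im}(\varphi)\supset N\neq 0$. Therefore $M\cong L_{\mathfrak{p}_n}(\lambda)$.

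Next I would produce the analogous surjection onto $T(A_n,V(\lambda))$. From formula (\ref{3.1}), the subspace $1\otimes V(\lambda)\subset T(A_n,V(\lambda))$ is annihilated by $\mm_n$ (since $\psi(e_{0j})=-\partial/\partial x_j$ kills $1$) and is stable under $\ml_n$, on which it realizes the $\ml_n$-module $V(\lambda)$. Lemma \ref{tensor-mod}(a) already records that $T(A_n,V(\lambda))$ is a highest weight $\sl_{n+1}$-module with highest weight $\lambda$ and cyclic generator $1\otimes v_\lambda\in 1\otimes V(\lambda)$, so it is generated by $1\otimes V(\lambda)$ as an $\sl_{n+1}$-module. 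A second application of the universal property yields a surjection $M_{\mathfrak{p}_n}(\lambda)\twoheadrightarrow T(A_n,V(\lambda))$. Since $M_{\mathfrak{p}_n}(\lambda)$ has $L_{\mathfrak{p}_n}(\lambda)$ as its unique simple quotient, so does $T(A_n,V(\lambda))$, and we conclude that $M\cong L_{\mathfrak{p}_n}(\lambda)$ is a simple quotient of $T(A_n,V(\lambda))$.

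The main pitfall is merely bookkeeping: one must use the fixed isomorphism $\ml_n\cong\gl_n$ (with $e_{ij}\leftrightarrow E_{ij}$ for $i\neq j$ and $h_k\leftrightarrow E_{kk}$) consistently so that the $\ml_n$-structure on $N\subset\mathrm{wh}_{\mathbf{0}}(M)$ matches the $\gl_n$-structure used to build $V(\lambda)$ inside $T(A_n,V(\lambda))$. Apart from this identification, the argument is a direct combination of the universal property of parabolic Verma modules with the explicit formulas (\ref{3.1}) for $\psi$.
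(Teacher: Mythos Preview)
Your proof is correct and follows essentially the same route as the paper: show $M\cong L_{\mathfrak{p}_n}(\lambda)$ via the universal property of $M_{\mathfrak{p}_n}(\lambda)$, then observe that $T(A_n,V(\lambda))$ is also a quotient of $M_{\mathfrak{p}_n}(\lambda)$. The only minor difference is that the paper asserts directly that simplicity of $M$ forces $\mathrm{wh}_{\mathbf{0}}(M)$ itself to be a simple $\ml_n$-module, whereas you sidestep this by choosing an irreducible $\ml_n$-submodule $N\subset \mathrm{wh}_{\mathbf{0}}(M)$; your version is slightly more economical since it avoids that extra claim, and nothing in the proposition requires the full Whittaker space to be irreducible.
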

\begin{proof} Note that the subspace
 $\mathrm{wh}_{\mathbf{0}}(M)=\{v\in M \mid e_{0i} v=0, \ \forall\ i=1,\dots,n\}$ is finite dimensional. From $[\ml_n,  \mm_n]\subset \mm_n$,  we see that
 $\ml_n\mathrm{wh}_{\mathbf{0}}(M)\subset \mathrm{wh}_{\mathbf{0}}(M)$. The simplicity of $M$  implies that
 $\mathrm{wh}_{\mathbf{0}}(M)$ is a finite dimensional  simple  $\ml_n$-module. Since  $\ml_n\cong \gl_n$, $\mathrm{wh}_{\mathbf{0}}(M)\cong V(\lambda)$ for some  $\lambda\in \Lambda_n^{+}$. Since $\mm_n\mathrm{wh}_{\mathbf{0}}(M)=0$,
 $M$ is isomorphic to the simple quotient $L_{\mathfrak{p}_n}(\lambda)$ of parabolic Verma module $M_{\mathfrak{p}_n}(\lambda)$.  Then
 this proposition follows from that $L_{\mathfrak{p}_n}(\lambda)$  is a simple quotient of $T(A_n, V(\lambda))$.
\end{proof}

The parabolic BGG category $\mathcal{O}_{\mathfrak{p}_n}$ is the category  of finitely
generated weight $\sl_{n+1}$-modules that are locally finite over $\mathfrak{p}_n$. Clearly  $\mathcal{O}_{\mathfrak{p}_n}$ is a full subcategory of $\ch_0$. By Proposition \ref{h-zero}, $\mathcal{O}_{\mathfrak{p}_n}$ and $\ch_0$ have the same simple objects. There are many studies on $\mathcal{O}_{\mathfrak{p}_n}$. For example,  integral blocks of $\mathcal{O}_{\mathfrak{p}_n}$ were studied in \cite{BS}, injective modules in $\mathcal{O}_{\mathfrak{p}_n}$ were constructed in \cite{CG} in  terms of differential operators.

\vspace{2mm}
\noindent
{\bf Acknowledgments. }This research is supported  by NSF of China (Grants
12371026, 12271383).
The authors would like to thank
the referee for nice suggestions concerning the presentation of the
paper.

\vspace{4mm}

 \noindent G.L.: School of Mathematics and Statistics,
and  Institute of Contemporary Mathematics,
Henan University, Kaifeng 475004, China. Email: liugenqiang@henu.edu.cn

\vspace{0.2cm}

\noindent Y. Li: School of Mathematics and Statistics, Henan University, Kaifeng
475004, China. Email: 897981524@qq.com

\end{document}